\newenvironment{figurehere}
  {\def\@captype{figure}}
  {}
\theoremstyle{definition}
\newtheorem{df}{Definition} [section]
\theoremstyle{plain}
\newtheorem{thm}[df]{Theorem}
\newtheorem{lemma}[df]{Lemma}
\newtheorem{problem}[df]{Problem}
\newtheorem{claim}[df]{Claim}
\title{Area Problems Involving Kasner Polygons}
\author{Dan Ismailescu}
\address{Department of Mathematics, Hofstra University,
Hempstead, NY 11549, USA.}
\email{matdpi@hofstra.edu}
\author{Minsuk Kim}
\address{Paul VI Catholic High School, 10675 Fairfax Blvd., Fairfax, VA 22030, USA.}
\email{mikey1115@naver.com}
\author{Kyung Jae Lee}
\address{Bergen County Academies, 200 Hackensack Avenue, Hackensack, NJ 07601, USA.}
\email{kyulee@bergen.org}
\author{Seong Hoon Lee}
\address{Philips Exeter Academy, 20 Main Street, Exeter, NH 03833, USA.}
\email{slee@exeter.edu}
\author{Taehyeun Park}
\address{Bronx High School of Science, 75 West 205th Street, Bronx, NY 10468, USA.}
\email{taehyun714@gmail.com}
\begin{document}

\begin{abstract}
Sequences of polygons generated by performing iterative processes on
an initial polygon have been studied extensively. One of the most
popular sequences is the one sometimes referred to as {\it Kasner
polygons}. Given a polygon $K$, the first Kasner descendant $K'$ of
$K$ is obtained by placing the vertices of $K'$ at the midpoints of
the edges of $K$.

More generally, for any fixed $m$ in $(0,1)$ one may define a
sequence of polygons $\{K^{t}\}_{t\ge 0}$ where each polygon
$K^{t}$ is obtained by dividing every edge of $K^{t-1}$ into the
ratio $m:(1-m)$ in the counterclockwise (or clockwise) direction and
taking these division points to be the vertices of $K^{t}$.

We are interested in the following problem

{\it Let $m$ be a fixed number in $(0,1)$ and let $n\ge 3$ be a
fixed integer. Further, let $K$ be a convex $n$-gon and denote by
$K'$, the first $m$-Kasner descendant of $K$, that is, the vertices
of $K'$ divide the edges of $K$ into the ratio $m:(1-m)$. What can
be said about the ratio between the area of $K'$ and the area of
$K$, when $K$ varies in the class of convex $n$-gons?}

We provide a complete answer to this question.

\end{abstract}
\maketitle
\noindent {\it Mathematics  subject classification numbers}: {52A10, 52A40}

\noindent {\it Key words and phrases}: {Kasner polygons, areas, wedge product}

\begin{section}{\bf Introduction} Start with a fixed number $m$ in
$(0,1)$ and a convex $n$-gon $K$. Let $K'$ be the convex $n$-gon
whose vertices divide the edges of $K$ into the ratio $m:(1-m)$ in
the counterclockwise direction. We say $K'$ is the first $m$-Kasner
descendant of $K$. In general, we may construct a sequence of
polygons $\{K^{t}\}_{t\ge 0}$ where $K^0=K$, $K^1=K'$ and $K^{t+1}$
is the first $m$-Kasner descendant of $K^{t}$.

Kasner noticed that if $n=4$ and $m=1/2$ then $K'$ is always a
parallelogram. In \cite {K} he and his students managed to
characterize those $n$-gons $P$ which have the property that $P=K'$
for some convex $n$-gon $K$ and $m=1/2$.

It is easy to notice that all $n$-gons in the sequence $\{K^{t}\}$
defined above have the same centroid. This was proved repeatedly;
see e. g. \cite {D} for a matrix proof or \cite {Sch} for a proof
using Fourier series.

It has been shown by L\"{u}k\~{o} \cite {L} that the sequence
$\{K^{t}\}$ converges to an (affine) regular $n$-gon, thus proving a
conjecture of Fejes T\'{o}th \cite {LFT}. More on Kasner polygons
can be found in \cite{BS,C,HZ,KN, KM}.

In this paper we study the following:
\begin{problem}
Let $m$ be a fixed number in $(0,1)$ and let $n\ge 3$ be a fixed
integer. Let $K$ be a convex $n$-gon and denote by $K'$, the first
$m$-Kasner descendant of $K$. What can be said about
$\Delta(K')/\Delta(K)$, the ratio between the area of $K'$ and the
area of $K$, when $K$ varies in the class of convex $n$-gons?
\end{problem}
%We provide a complete answer to this question.

\noindent{\bf The Main Technique.} Throughout the entire paper we
use the {\it wedge product} of two vectors to express areas. This
operation, also known as {\it exterior product}, is defined as
follows. For any two vectors $\mathbf{v}=(a, \, b)$ and
$\mathbf{u}=(c,\,d)$ let the wedge product of $\mathbf{v}$ and
$\mathbf{u}$ be given by $\mathbf{v}\wedge\mathbf{u}:=(ad-bc)/2.$ It
is easy to see that the wedge product represents the {\it signed
area} of the triangle determined by the vectors $\mathbf{v}$ and
$\mathbf{u}$, where the $\pm$ sign depends on whether the angle
between $\mathbf{v}$ and $\mathbf{u}$ - measured in the
counterclockwise direction from $\mathbf{v}$ towards $\mathbf{u}$ -
is smaller than or greater than $180^{\circ}$.

The following properties of the wedge product are simple
consequences of the definition:
\begin{itemize}
\item{anti-commutativity: $\mathbf{v}\wedge\mathbf{u}=-
\mathbf{u}\wedge\mathbf{v}$ and in particular
$\mathbf{v}\wedge\mathbf{v}=0$.}

\item{linearity: $(\alpha\mathbf{v}+\beta
\mathbf{u})\wedge\mathbf{w}=\alpha\mathbf{v}\wedge\mathbf{w}
+\beta\mathbf{u}\wedge\mathbf{w}.$}
\end{itemize}

\end{section}

\begin{section}{\bf The Triangle Case}

\noindent Let $m$ be a fixed number in $(0,\,1)$ and let $K=ABC$ be
an arbitrary triangle. Construct points $M$, $N$ and $P$ on the
sides $AB$, $BC$ and $AC$, such that $AM:MB=BN:NC=CP:PA=m:(1-m).$
\noindent We call triangle $K'=MNP$ to be the {\it first $m$-Kasner
descendant of $K$}.

\begin{thm}\label{thmtriangle}
With the notations above we have that
\begin{equation}\label{eqtriangle}
\frac{\Delta(K')}{\Delta(K)}=1-3m(1-m).
\end{equation}
\end{thm}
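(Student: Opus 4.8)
The plan is to express all areas through the wedge product and then exploit the cyclic symmetry of the configuration. First I would fix an arbitrary origin and write the three new vertices as affine combinations of the old ones:
\[
M=(1-m)A+mB,\qquad N=(1-m)B+mC,\qquad P=(1-m)C+mA,
\]
which encode the division ratios $AM:MB=BN:NC=CP:PA=m:(1-m)$. Since the signed area of any triangle $XYZ$ can be written as the cyclic sum $X\wedge Y+Y\wedge Z+Z\wedge X$ (a restatement of the shoelace formula that follows directly from the definition of $\wedge$ and is translation invariant), I would record
\[
\Delta(K)=A\wedge B+B\wedge C+C\wedge A,\qquad
\Delta(K')=M\wedge N+N\wedge P+P\wedge M.
\]

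Next I would expand $\Delta(K')$ using bilinearity, discarding the diagonal terms via $\mathbf v\wedge\mathbf v=0$. The computation is greatly shortened by the cyclic symmetry $A\to B\to C\to A$, $M\to N\to P\to M$: it suffices to expand a single product, say
\[
M\wedge N=(1-m)^2\,A\wedge B+m(1-m)\,A\wedge C+m^2\,B\wedge C,
\]
and then obtain $N\wedge P$ and $P\wedge M$ by cyclically relabeling the letters. Adding the three contributions and collecting the coefficients of the three basis quantities $A\wedge B$, $B\wedge C$, $C\wedge A$—using anti-commutativity to rewrite stray terms such as $B\wedge A$ as $-A\wedge B$—I expect each coefficient to collapse to the same value. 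Finally I would check that this common coefficient is
\[
(1-m)^2-m(1-m)+m^2=1-3m+3m^2=1-3m(1-m),
\]
so that $\Delta(K')=\bigl(1-3m(1-m)\bigr)\,\Delta(K)$ and \eqref{eqtriangle} follows after dividing by $\Delta(K)$; since both triangles carry the same (counterclockwise) orientation, the signed ratio agrees with the ratio of the actual areas.

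The real content here is bilinear bookkeeping rather than any deep idea, so the only place to be careful is the sign-tracking: each of $M\wedge N$, $N\wedge P$, $P\wedge M$ produces exactly one term whose wedge appears in the ``wrong'' cyclic order and must be flipped before the coefficients can be compared. If one instead anchors the origin at a vertex to write $\Delta(K')=(N-M)\wedge(P-M)$, the symmetry is broken and the algebra becomes noticeably messier; keeping the symmetric cyclic-sum form is what makes all three coefficients coincide automatically and is the key to a clean proof.
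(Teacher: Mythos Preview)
Your proof is correct and uses the same underlying tool as the paper—bilinearity of the wedge product—but you parametrize differently. You work with position vectors and the shoelace cyclic sum $\Delta=A\wedge B+B\wedge C+C\wedge A$, expanding $M\wedge N+N\wedge P+P\wedge M$ and invoking the cyclic symmetry so that all three coefficients coincide. The paper instead works with edge vectors $\mathbf v_i=\overrightarrow{A_iA_{i+1}}$, normalizes so that $\mathbf v_1\wedge\mathbf v_2=\mathbf v_2\wedge\mathbf v_3=\mathbf v_3\wedge\mathbf v_1=1$ (a consequence of $\mathbf v_1+\mathbf v_2+\mathbf v_3=\mathbf 0$), and computes $\Delta(K')$ as the \emph{single} wedge $\overrightarrow{MN}\wedge\overrightarrow{NP}$. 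Ironically, this is essentially the ``anchored'' variant you warn against as messier; with the edge-vector normalization it is in fact at least as short as your symmetric version, since only one product needs expanding and no cyclic bookkeeping or sign-flipping is required. Both routes are clean; yours highlights the symmetry explicitly, while the paper's hides it in the normalization and gets away with one wedge instead of three.
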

\begin{proof}
Denote $\overrightarrow{AB}=\mathbf{v}_1$,
$\overrightarrow{BC}=\mathbf{v}_2$ and
$\overrightarrow{CA}=\mathbf{v}_3$ as in figure \ref{tdk1}.
Obviously, $\mathbf{v}_1+\mathbf{v}_2+\mathbf{v}_3=\mathbf{0}.$
After an appropriate scaling we may assume that the area of $ABC$ is equal to $1$, that is
\begin{equation*}
\Delta(K)=\mathbf{v}_1\wedge\mathbf{v}_2=\mathbf{v}_2\wedge\mathbf{v}_3=\mathbf{v}_3\wedge\mathbf{v}_1=1.
\end{equation*}

\begin{figurehere}
\begin{center}
\scalebox{.5}{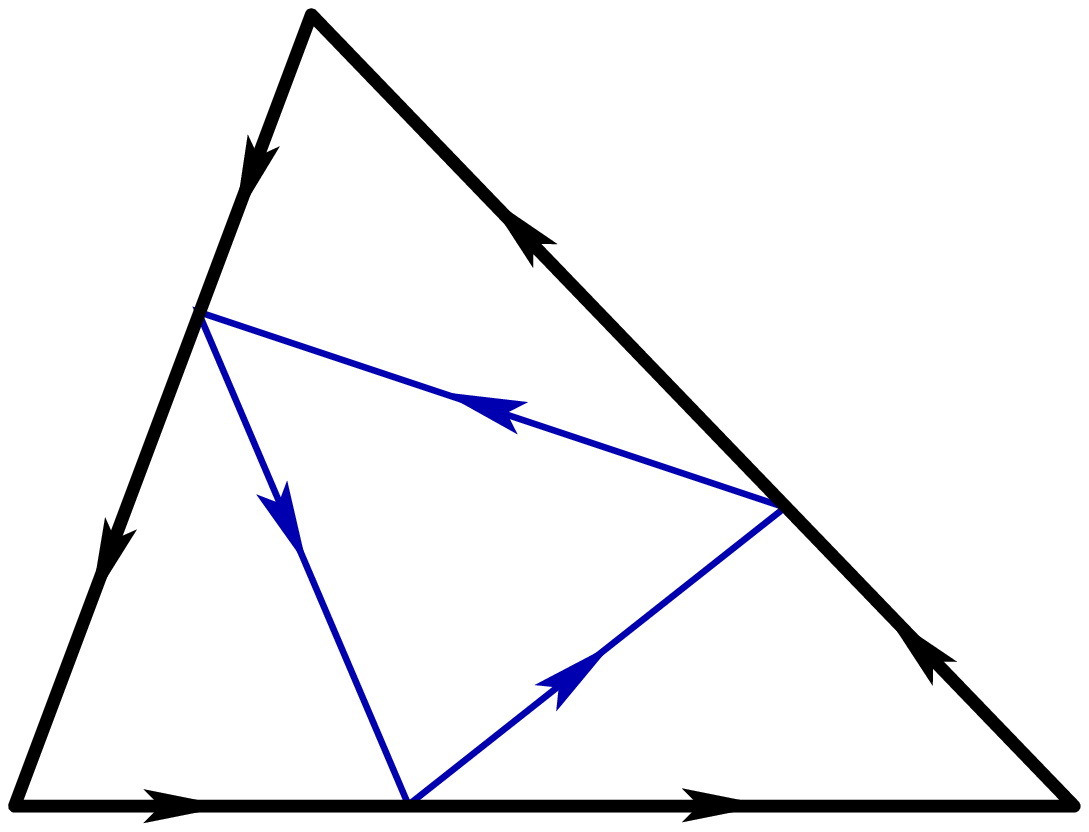} \caption{A triangle and its
first $m$-Kasner descendant} \label{tdk1}
\end{center}
\end{figurehere}

It is easy to see that $\overrightarrow{MN}=\overrightarrow{MB}+\overrightarrow{BN}=(1-m)\mathbf{v}_1+m\mathbf{v}_2$
and $\overrightarrow{NP}=\overrightarrow{NC}+\overrightarrow{CP}=(1-m)\mathbf{v}_2+m\mathbf{v}_3$.
It follows that
\begin{eqnarray*}
\Delta(K')&=&\overrightarrow{MN}\wedge\overrightarrow{NP}=((1-m)\mathbf{v}_1+m\mathbf{v}_2)\wedge((1-m)\mathbf{v}_2+m\mathbf{v}_3)=\\
&=&(1-m)^2(\mathbf{v}_1\wedge\mathbf{v}_2)+m(1-m)(\mathbf{v}_1\wedge\mathbf{v}_3)+m^2(\mathbf{v}_2\wedge\mathbf{v}_3)=\\
&=&(1-m)^2+m(1-m)(-1)+m^2=1-3m(1-m).
\end{eqnarray*}
\end{proof}
\end{section}
%%%%%%%%%%%%%%%%%%%%%%%%%%%%%%%%%%%%%%%%%%%%%%%%%%%%%%%%%%%%%%%%%%%%%%%%%%%%%%%%%%%%%%%%%%%%%%%%%%%%%%%%%%%%%%%%%%%
%%%%%%%%%%%%%%%%%%%%%%%%%%%%%%%%%%%%%%%%%%%%%%%%%%%%%%%%%%%%%%%%%%%%%%%%%%%%%%%%%%%%%%%%%%%%%%%%%%%%%%%%%%%%%%%%%%%
\begin{section}{\bf The Quadrilateral Case}

\noindent Let $m$ be a fixed number in $(0,\,1)$ and let $K=ABCD$ be
an arbitrary quadrilateral. Construct points $M$, $N$, $P$ and $Q$
on the sides $AB$, $BC$, $CD$ and $DA$, such that
$AM:MB=BN:NC=CP:PD=DQ:QA=m:(1-m).$ We call quadrilateral $K'=MNPQ$
to be the {\it first $m$-Kasner descendant of $K$}.
\begin{thm}\label{thmquadrangle}
With the notations above we have that
\begin{equation}\label{eqquadrangle}
\frac{\Delta(K')}{\Delta(K)}=1-2m(1-m).
\end{equation}
\end{thm}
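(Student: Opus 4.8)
The plan is to reuse the machinery of Theorem \ref{thmtriangle}, replacing the single wedge product that computes a triangle's area by a formula suited to quadrilaterals. First I would set $\overrightarrow{AB}=\mathbf{v}_1$, $\overrightarrow{BC}=\mathbf{v}_2$, $\overrightarrow{CD}=\mathbf{v}_3$, $\overrightarrow{DA}=\mathbf{v}_4$, so that the closure relation $\mathbf{v}_1+\mathbf{v}_2+\mathbf{v}_3+\mathbf{v}_4=\mathbf{0}$ holds. Exactly as in the triangle case, the edges of $K'$ are $\overrightarrow{MN}=(1-m)\mathbf{v}_1+m\mathbf{v}_2$, $\overrightarrow{NP}=(1-m)\mathbf{v}_2+m\mathbf{v}_3$, $\overrightarrow{PQ}=(1-m)\mathbf{v}_3+m\mathbf{v}_4$, and $\overrightarrow{QM}=(1-m)\mathbf{v}_4+m\mathbf{v}_1$.

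The next step is to choose a convenient area formula. For a quadrilateral the single-wedge trick no longer computes the area directly, so I would use the identity (easily verified by a shoelace expansion together with anti-commutativity) that the signed area of a quadrilateral equals the wedge product of its two diagonals. Applying this to $K=ABCD$ gives $\Delta(K)=\overrightarrow{AC}\wedge\overrightarrow{BD}=(\mathbf{v}_1+\mathbf{v}_2)\wedge(\mathbf{v}_2+\mathbf{v}_3)$, which upon expansion reads $\Delta(K)=a+b+c$, where I abbreviate $a=\mathbf{v}_1\wedge\mathbf{v}_2$, $b=\mathbf{v}_1\wedge\mathbf{v}_3$, $c=\mathbf{v}_2\wedge\mathbf{v}_3$. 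Similarly $\Delta(K')=\overrightarrow{MP}\wedge\overrightarrow{NQ}$, where $\overrightarrow{MP}=\overrightarrow{MN}+\overrightarrow{NP}$ and $\overrightarrow{NQ}=\overrightarrow{NP}+\overrightarrow{PQ}$.

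Then I would eliminate $\mathbf{v}_4$ using $\mathbf{v}_4=-(\mathbf{v}_1+\mathbf{v}_2+\mathbf{v}_3)$, so that both diagonals of $K'$ become combinations of $\mathbf{v}_1,\mathbf{v}_2,\mathbf{v}_3$ alone; one finds $\overrightarrow{MP}=(1-m)\mathbf{v}_1+\mathbf{v}_2+m\mathbf{v}_3$ and $\overrightarrow{NQ}=-m\mathbf{v}_1+(1-2m)\mathbf{v}_2+(1-m)\mathbf{v}_3$. Expanding $\overrightarrow{MP}\wedge\overrightarrow{NQ}$ by bilinearity and anti-commutativity then writes $\Delta(K')$ as a linear combination of the three independent products $a$, $b$, $c$.

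The crux — and the only place where anything really needs to be checked — is that the three coefficients of $a$, $b$, $c$ in this expansion turn out to be equal, each reducing to $1-2m+2m^2$. This common value is precisely $1-2m(1-m)$, and since $\Delta(K)=a+b+c$, the shape-dependent quantities $a$, $b$, $c$ factor out cleanly to give $\Delta(K')=(1-2m(1-m))(a+b+c)=(1-2m(1-m))\,\Delta(K)$. I expect the main obstacle to be exactly this coincidence of coefficients: for a quadrilateral the individual wedge products $\mathbf{v}_i\wedge\mathbf{v}_j$ are no longer all equal (as they conveniently were for the triangle), so a priori the ratio is a weighted combination of $a$, $b$, $c$ that could depend on the shape of $K$. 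Verifying that the weights align so that this combination collapses back onto $a+b+c$ is what makes the ratio universal over all convex quadrilaterals.
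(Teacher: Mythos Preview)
Your argument is correct, but it organizes the computation differently from the paper. The paper splits both $K$ and $K'$ into two triangles along a diagonal: writing $\Delta(K)=\mathbf{v}_1\wedge\mathbf{v}_2+\mathbf{v}_3\wedge\mathbf{v}_4$ and also $\Delta(K)=\mathbf{v}_2\wedge\mathbf{v}_3+\mathbf{v}_4\wedge\mathbf{v}_1$, and computing $\Delta(K')=\Delta(MNP)+\Delta(PQM)$. In that setup the cross terms $m(1-m)(\mathbf{v}_1\wedge\mathbf{v}_3)$ and $m(1-m)(\mathbf{v}_3\wedge\mathbf{v}_1)$ cancel by anti-commutativity, and what remains is exactly $(1-m)^2$ times the first expression for $\Delta(K)$ plus $m^2$ times the second, giving $(1-m)^2+m^2=1-2m(1-m)$ directly. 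Your route via the diagonal formula $\Delta=\overrightarrow{AC}\wedge\overrightarrow{BD}$ reaches the same destination after eliminating $\mathbf{v}_4$ and checking that the three coefficients of $a,b,c$ all equal $1-2m+2m^2$; this is equally valid, but it makes the universality look like a coefficient coincidence to be verified rather than a structural cancellation. The paper's decomposition has the modest advantage that one sees \emph{why} the shape-dependent data drops out without any case-by-case bookkeeping.
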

\begin{proof}
Denote $\overrightarrow{AB}=\mathbf{v}_1$,
$\overrightarrow{BC}=\mathbf{v}_2$,
$\overrightarrow{CD}=\mathbf{v}_3$ and
$\overrightarrow{DA}=\mathbf{v}_4$ as in figure \ref{tdk2}.
Obviously,
$\mathbf{v}_1+\mathbf{v}_2+\mathbf{v}_3+\mathbf{v}_4=\mathbf{0}.$
One can express the area of $ABCD$ in a couple of ways as below.
\begin{eqnarray}
\Delta(K)&=&\Delta(ABC)+\Delta(CDA)=\mathbf{v}_1\wedge\mathbf{v}_2+\mathbf{v}_3\wedge\mathbf{v}_4\qquad {\mbox {and}}\label{area1}\\
\Delta(K)&=&\Delta(BCD)+\Delta(CDA)=\mathbf{v}_2\wedge\mathbf{v}_3+\mathbf{v}_4\wedge\mathbf{v}_1.\label{area2}
\end{eqnarray}
\begin{figurehere}
\begin{center}
\scalebox{.4}{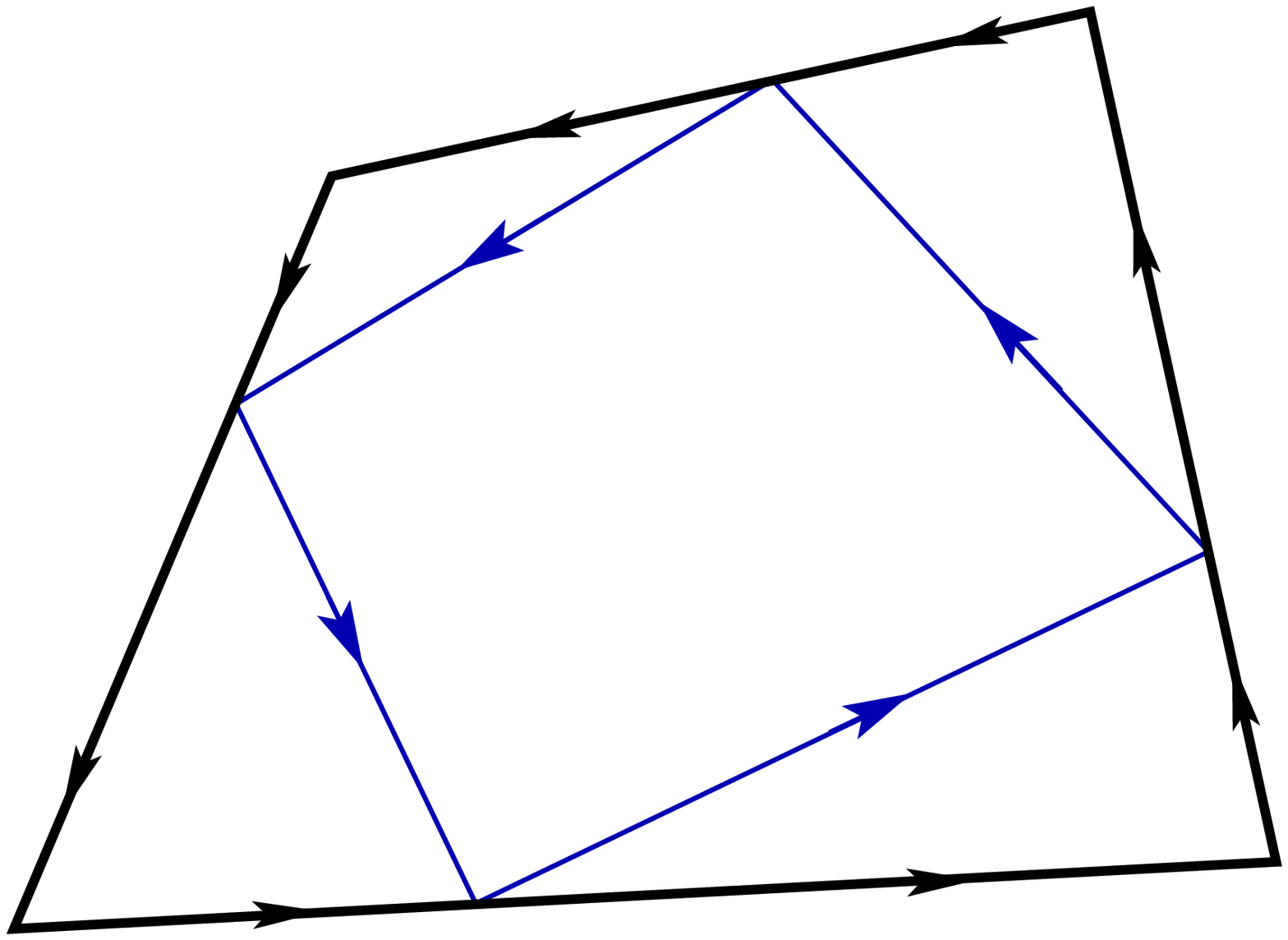} \caption{A convex
quadrilateral and its first $m$-Kasner descendant} \label{tdk2}
\end{center}
\end{figurehere}

On the other hand
\begin{eqnarray*}
\Delta(MNP)&=&\overrightarrow{MN}\wedge\overrightarrow{NP}=((1-m)\mathbf{v}_1+m\mathbf{v}_2)\wedge((1-m)\mathbf{v}_2+m\mathbf{v}_3)=\\
&=&(1-m)^2(\mathbf{v}_1\wedge\mathbf{v}_2)+m(1-m)(\mathbf{v}_1\wedge\mathbf{v}_3)+m^2(\mathbf{v}_2\wedge\mathbf{v}_3).\\
\Delta(PQM)&=&\overrightarrow{PQ}\wedge\overrightarrow{QM}=((1-m)\mathbf{v}_3+m\mathbf{v}_4)\wedge((1-m)\mathbf{v}_4+m\mathbf{v}_1)=\\
&=&(1-m)^2(\mathbf{v}_3\wedge\mathbf{v}_4)+m(1-m)(\mathbf{v}_3\wedge\mathbf{v}_1)+m^2(\mathbf{v}_4\wedge\mathbf{v}_1). \end{eqnarray*}
Adding the two equalities above term by term and using (\ref {area1}) and (\ref {area2}) we obtain
\begin{eqnarray*}
\Delta(K')&=&(1-m)^2((\mathbf{v}_1\wedge\mathbf{v}_2)+(\mathbf{v}_3\wedge\mathbf{v}_4))+m^2((\mathbf{v}_2\wedge\mathbf{v}_3)+(\mathbf{v}_4\wedge\mathbf{v}_1))=\\
&=&(1-m)^2\Delta(K)+m^2\Delta(K)=(1-2m(1-m))\Delta(K).
\end{eqnarray*}
\end{proof}
\noindent We have seen that the ratio between the area of a convex
$n$-gon and the area of its first $m$-Kasner descendant is constant
if $n\le 4$. This is not true anymore if the initial polygon has at
least five sides. In this later case we will be interested in the
range of values the ratio $\Delta(K')/\Delta(K)$ takes when $K$
belongs to the class of convex $n$-gons. We are investigating this
question in the following sections.
\end{section}
%%%%%%%%%%%%%%%%%%%%%%%%%%%%%%%%%%%%%%%%%%%%%%%%%%%%%%%%%%%%%%%%%%%%%%%%%%%%%%%%%%%%%%%%%%%%%%%%%%%%%%%%%%%%%%%%%%%%%
%%%%%%%%%%%%%%%%%%%%%%%%%%%%%%%%%%%%%%%%%%%%%%%%%%%%%%%%%%%%%%%%%%%%%%%%%%%%%%%%%%%%%%%%%%%%%%%%%%%%%%%%%%%%%%%%%%%%%

\begin{section}{\bf The Pentagon Case - A First Attempt}

\noindent Let $m$ be a fixed number in $(0,\,1)$ and let $K=ABCDE$
be an arbitrary convex pentagon. Construct points $M$, $N$, $P$, $Q$
and $R$ on the sides $AB$, $BC$, $CD$, $DE$ and $EA$ such that
$AM:MB=BN:NC=CP:PD=DQ:QE=ER:RA=m:(1-m).$ As before, we call the
pentagon $K'=MNPQR$ to be the {\it first $m$-Kasner descendant of
$K$}.

\begin{thm}\label{thmpentagon}
With the notations above we have that
\begin{equation}\label{eqpentagon}
1-2m(1-m)<\frac{\Delta(K')}{\Delta(K)}<1-m(1-m).
\end{equation}
Moreover, both lower and upper bounds are the best possible.
\end{thm}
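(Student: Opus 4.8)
The plan is to mimic the wedge-product computation of the previous two sections, but to carry it out in a way that isolates the one term that made the triangle and quadrilateral ratios constant. Writing $V_1,\dots,V_5$ for the position vectors of the vertices of $K$ (all indices read mod $5$), the vertices of $K'$ are $M_i=(1-m)V_i+mV_{i+1}$, so from $\Delta(K')=\sum_i M_i\wedge M_{i+1}$ and bilinearity each term $V_{i+1}\wedge V_{i+1}$ drops out and the survivors regroup as
\begin{equation*}
\Delta(K')=\bigl((1-m)^2+m^2\bigr)\sum_i V_i\wedge V_{i+1}+m(1-m)\sum_i V_i\wedge V_{i+2}.
\end{equation*}
Since $\sum_i V_i\wedge V_{i+1}=\Delta(K)$ and $(1-m)^2+m^2=1-2m(1-m)$, this says
\begin{equation*}
\frac{\Delta(K')}{\Delta(K)}=1-2m(1-m)+m(1-m)\,\frac{D}{\Delta(K)},\qquad D:=\sum_{i=1}^5 V_i\wedge V_{i+2}.
\end{equation*}
(The same identity specializes correctly: $D=-\Delta(K)$ when $n=3$ and $D=0$ when $n=4$.) Because $m(1-m)>0$, the two claimed bounds are \emph{exactly equivalent} to the single geometric statement $0<D<\Delta(K)$, the lower bound $1-2m(1-m)$ matching $D>0$ and the upper bound $1-m(1-m)$ matching $D<\Delta(K)$. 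So the whole theorem reduces to bracketing the signed area $D$ of the pentagram $V_1V_3V_5V_2V_4$ between $0$ and $\Delta(K)$.

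To handle both inequalities I would introduce the five \emph{ears} $T_i:=\triangle V_{i-1}V_iV_{i+1}$ cut off by the short diagonals and record the identity $\sum_{i=1}^5\Delta(T_i)=2\Delta(K)-D$, obtained by expanding each $\Delta(T_i)=(V_i-V_{i-1})\wedge(V_{i+1}-V_{i-1})$ and summing (valid for any pentagon). Then $D>0$ is equivalent to $\sum_i\Delta(T_i)<2\Delta(K)$. For this I would use convexity: each ear lies in $K$; two non-adjacent ears meet only in a common vertex; and among any three of the five ears two are non-adjacent, since the ``share-an-edge'' adjacency graph is the triangle-free cycle $C_5$. Hence no interior point is covered more than twice, so $\sum_i\Delta(T_i)\le 2\cdot\mathrm{area}\!\left(\bigcup_i T_i\right)<2\Delta(K)$, the strict inequality coming from the central pentagon $C$ (bounded by the diagonals), which no ear reaches. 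This settles $D>0$ cleanly.

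The main obstacle is the upper bound $D<\Delta(K)$, equivalently $\sum_i\Delta(T_i)>\Delta(K)$. Here the ears do \emph{not} cover $K$ — they miss the central pentagon $C$ — so the inequality is a real competition between the overlaps of the ears and this uncovered hole. Consecutive ears $T_i,T_{i+1}$ share the edge $V_iV_{i+1}$ and overlap exactly in the triangle $E_i:=\triangle V_iV_{i+1}W_i$, where $W_i=V_{i-1}V_{i+1}\cap V_iV_{i+2}$ is a vertex of $C$; inclusion--exclusion (only consecutive ears overlap, and with multiplicity null on triples) gives $\sum_i\Delta(T_i)=\Delta(K)-\mathrm{area}(C)+\sum_i\Delta(E_i)$, so the target becomes the sharp inequality $\sum_{i=1}^5\Delta(E_i)>\mathrm{area}(C)$: the five edge triangles standing on the sides of $K$ with apexes at the vertices of the inner pentagon have total area exceeding that inner pentagon. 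I expect this to be the hard step, precisely because it is tight — it degenerates to an equality exactly in the limit that realizes the upper bound. I would prove it by slicing each ear $T_i$ from its apex $V_i$ through $W_{i-1},W_i$ into $E_{i-1}$, a star-point triangle, and $E_i$, which makes the relevant areas proportional to segment lengths along the diagonals; the convexity constraints on where the points $W_i$ fall then pin down these ratios and force the inequality (after an affine normalization fixing three vertices, this is a bounded, checkable estimate).

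Finally, for ``best possible'' it suffices, by the reduction, to push $D/\Delta(K)$ to each endpoint of $(0,1)$ through convex pentagons. For the lower bound I would take a fixed triangle and replace one side by a very short, very shallow two-vertex bump — two adjacent vertices very close to each other and lying near that side; a short computation gives $D/\Delta(K)\to 0$, so the ratio tends to $1-2m(1-m)$. For the upper bound I would truncate two corners of a fixed triangle, each into a pair of nearby vertices; then $D\to\Delta(K)$ and the ratio tends to $1-m(1-m)$. Both families remain strictly convex while approaching, but never attaining, the bounds, which establishes their sharpness.
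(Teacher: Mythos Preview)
Your reduction is correct and pleasant: with $D=\sum_i V_i\wedge V_{i+2}$ you do get $\Delta(K')/\Delta(K)=1-2m(1-m)+m(1-m)\,D/\Delta(K)$, and the ear identity $\sum_i\Delta(T_i)=2\Delta(K)-D$ holds. Your covering argument for $D>0$ is clean and complete: non-adjacent ears have disjoint interiors, the adjacency graph $C_5$ is triangle-free, so multiplicity is at most $2$, and the central pentagon gives the strict inequality. Your sharpness families also work (the ``triangle with a two-vertex bump'' collapses $V_4,V_5$ to a point on the line $V_1V_3$, so $D\to\Delta(\triangle V_1V_3P)=0$; the doubly-truncated triangle gives $D\to\Delta(K)$).

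The genuine gap is the upper bound $D<\Delta(K)$, which you correctly reduce to $\sum_i\Delta(E_i)>\operatorname{area}(C)$ but do not prove. What you write --- ``slice the ears, the areas become ratios along the diagonals, affinely normalize, and it is a bounded checkable estimate'' --- is not an argument; it is a hope that a two-parameter inequality will turn out to be verifiable. It may well be, but nothing you have said rules out the possibility that the resulting inequality is as delicate as the original, and you give no mechanism (no extremal/variational step, no symmetry reduction, no explicit inequality) that actually pins it down. This is exactly the step where the paper invests its effort, and it does so by a quite different device: after cyclically relabeling so that $\triangle ABC$ is the ear of \emph{smallest} area, it introduces $O=AD\cap CE$, writes everything in four positive parameters $a,b,c,d$, and shows that the minimality assumption forces $c\le1$, $d\le1$ while convexity forces $c+d>1$; these three facts immediately give $1+ad+bc\le1+a+b<c+d+a+b<a+b+c+d+ab$, which is the desired bound. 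The ``smallest-ear'' normalization is the missing idea: without some such symmetry-breaking choice the inequality is not a consequence of positivity alone, and your sketch supplies no substitute for it.
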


For reasons which will become clear soon, we postpone the proof of
theorem \ref{thmpentagon} until the next section. Let us first try
to approach this problem the same way we proved theorems \ref
{thmtriangle} and \ref {thmquadrangle}. As before, denote
$\overrightarrow{AB}=\mathbf{v}_1$,
$\overrightarrow{BC}=\mathbf{v}_2$,
$\overrightarrow{CD}=\mathbf{v}_3$,
$\overrightarrow{DE}=\mathbf{v}_4$ and
$\overrightarrow{EA}=\mathbf{v}_5$ as in figure \ref{tdk3}.
Obviously,
$\mathbf{v}_1+\mathbf{v}_2+\mathbf{v}_3+\mathbf{v}_4+\mathbf{v}_5=\mathbf{0}.$

\begin{figurehere}
\begin{center}
\scalebox{.5}{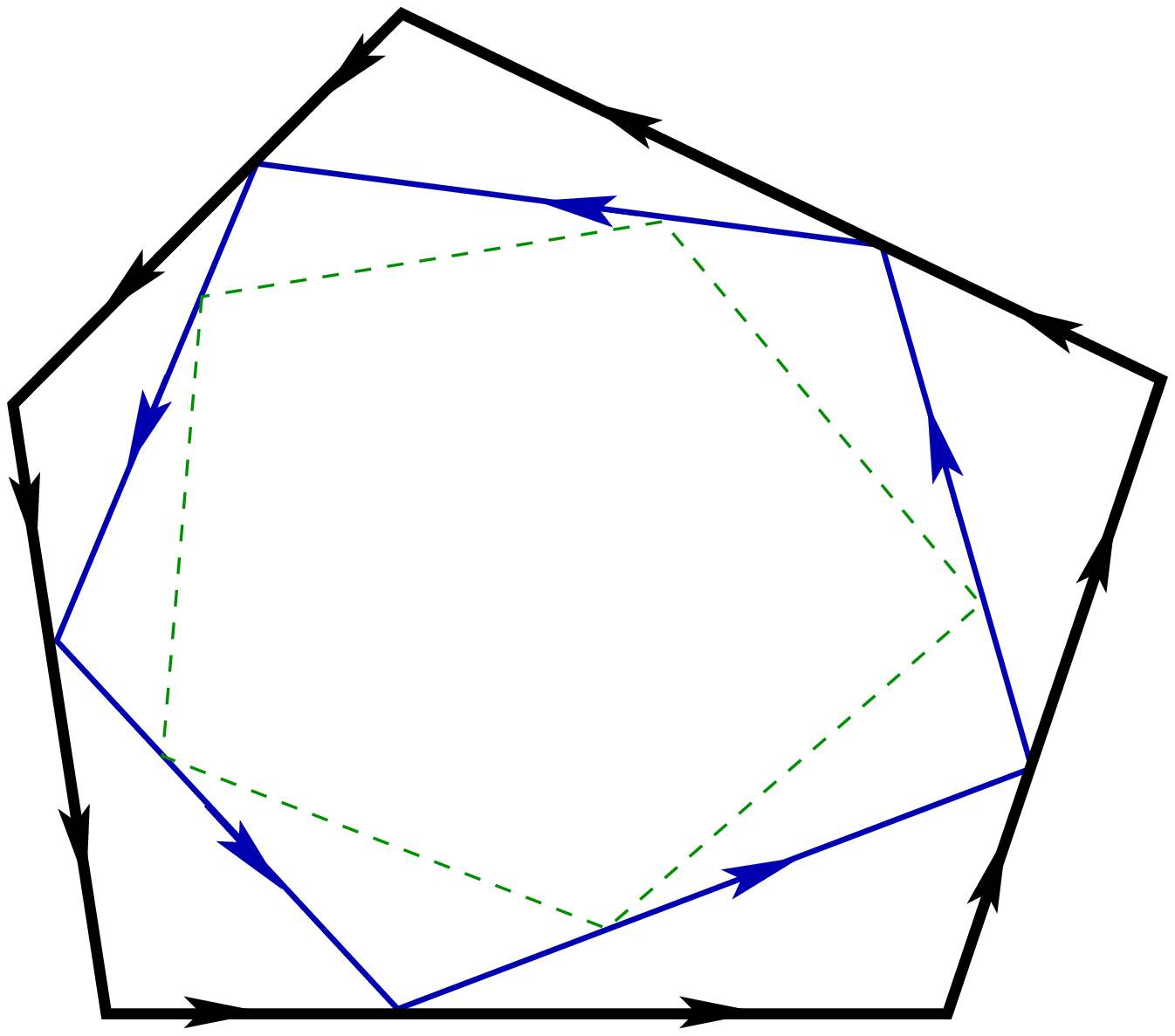} \caption{A convex
pentagon and its first two $m$-Kasner descendants} \label{tdk3}
\end{center}
\end{figurehere}

Let us introduce a couple of notations which are going to be useful later.
For every $1\le i,\,j \le 5$ denote $a_{ij}=\mathbf{v}_i\wedge\mathbf{v}_j$.
Moreover, let
\begin{eqnarray}
S:=a_{12}+a_{23}+a_{34}+a_{45}+a_{51}\label{S}.\\
T:=a_{13}+a_{24}+a_{35}+a_{41}+a_{52}\label{T}.
\end{eqnarray}
It is easy to see that $\Delta(MBN)=\overrightarrow{MB}\wedge\overrightarrow{BN}=(1-m)\mathbf{v}_1\wedge m\mathbf{v}_2=m(1-m)a_{12}$. Similar expressions can be found for the areas of the other four triangles
$NCP$, $PDQ$, $QER$ and $RAM$. Using (\ref {S}) it follows immediately that
\begin{equation}\label{delta1}
\Delta(K')=\Delta(K)-m(1-m)(a_{12}+a_{23}+a_{34}+a_{45}+a_{51})=\Delta(K)-m(1-m)S.
\end{equation}
Notice that there are several different ways in which $\Delta(K)$ can be expressed in terms of the $a_{ij}$-s.
For instance, we have that
\begin{eqnarray}\label{a13}
\Delta(K)&=&\Delta(ABC)+\Delta(ACD)+\Delta(ADE)=\overrightarrow{AB}\wedge\overrightarrow{BC}
+\overrightarrow{AC}\wedge\overrightarrow{CD}+\overrightarrow{DE}\wedge\overrightarrow{EA}=\nonumber\\
&=&\mathbf{v}_1\wedge\mathbf{v}_2+(\mathbf{v}_1+\mathbf{v}_2)\wedge\mathbf{v}_3+\mathbf{v}_4\wedge\mathbf{v}_5
=a_{12}+a_{13}+a_{23}+a_{45}.
\end{eqnarray}
Using equalities (\ref {delta1}) and (\ref {a13}) we easily derive the following
\begin{equation*}
\frac{\Delta(K')}{\Delta(K)}=1-m(1-m)\cdot\frac{a_{12}+a_{23}+a_{34}+a_{45}+a_{51}}{a_{12}+a_{13}+a_{23}+a_{45}}.
\end{equation*}
{\bf Observation.}
Proving theorem \ref {thmpentagon} reduces to showing that
\begin{equation*}
1<\frac{a_{12}+a_{23}+a_{34}+a_{45}+a_{51}}{a_{12}+a_{13}+a_{23}+a_{45}}<2
\end{equation*}
and that these inequalities cannot be improved.
This is somewhat of an awkward task. The reason is that the $a_{ij}$-s are not independent quantities.
Indeed, on one hand we have that $\mathbf{v}_1+\mathbf{v}_2+\mathbf{v}_3+\mathbf{v}_4+\mathbf{v}_5=\mathbf{0}.$
This implies for instance that $\mathbf{v}_1\wedge(\mathbf{v}_1+\mathbf{v}_2+\mathbf{v}_3+\mathbf{v}_4+\mathbf{v}_5)=
a_{12}+a_{13}+a_{14}+a_{15}=0$.

On the other hand, it can be easily shown that for any four distinct indices
$i$, $j$, $k$ and $l$ in $\{1,\, 2,\,3,\,4,\,5\}$ we have the following equality, known as Pl\"{u}cker's identity
\begin{equation}\label{plucker}
a_{ij}a_{kl}-a_{ik}a_{jl}+a_{il}a_{jk}=0.
\end{equation}
Indeed, it is easy to see that among the four vectors $\mathbf{v}_i$, $\mathbf{v}_j$, $\mathbf{v}_k$ and $\mathbf{v}_l$ there are two which are independent; say $\mathbf{v}_i$ and $\mathbf{v}_j$ are those vectors. Then $\mathbf{v}_k$ and $\mathbf{v}_l$ can be expressed as linear combinations of $\mathbf{v}_i$ and $\mathbf{v}_j$. Suppose that $\mathbf{v}_k=\alpha\mathbf{v}_i+\beta\mathbf{v}_j$ and $\mathbf{v}_l=\lambda\mathbf{v}_i+\mu\mathbf{v}_j$.
It follows immediately that $a_{ik}=\mathbf{v}_i\wedge(\alpha\mathbf{v}_i+\beta\mathbf{v}_j)=\beta a_{ij}$,
$a_{il}=\mathbf{v}_i\wedge(\lambda\mathbf{v}_i+\mu\mathbf{v}_j)=\mu a_{ij}$,
$a_{jk}=\mathbf{v}_j\wedge(\alpha\mathbf{v}_i+\beta\mathbf{v}_j)=-\alpha a_{ij}$,
$a_{jl}=\mathbf{v}_j\wedge(\lambda\mathbf{v}_i+\mu\mathbf{v}_j)=-\lambda a_{ij}$ and finally,
$a_{kl}=(\alpha\mathbf{v}_i+\beta\mathbf{v}_j)\wedge(\lambda\mathbf{v}_i+\mu\mathbf{v}_j)=(\alpha\mu-\beta\lambda) a_{ij}$. Substituting these equalities into the left side of (\ref {plucker}) we obtain the desired identity.

While it seems that this line of attack is destined to failure, one
can still derive an interesting fact. Let $K'':=STUVW$ be {\it the
second $m$-Kasner descendant of $K$} - see figure \ref {tdk3}. We
would like to see whether there is a relationship linking the areas
of $K$, $K'$ and $K''$.

For $1\le i,\,j\le 5$ denote $\mathbf{v}'_i=(1-m)\mathbf{v}_i+m\mathbf{v}_{i+1}$, $a'_{ij}=\mathbf{v}'_i\wedge\mathbf{v}'_j$ and $S':=a'_{12}+a'_{23}+a'_{34}+a'_{45}+a'_{51}$.
First notice that
\begin{equation*}
a'_{i,i+1}=\left[(1-m)\mathbf{v}_i+m\mathbf{v}_{i+1}\right]
\wedge\left[(1-m)\mathbf{v}_{i+1}+m\mathbf{v}_{i+2}\right]=(1-m)^2a_{i,i+1}+m(1-m)a_{i,i+2}+m^2a_{i+1,i+2}.
\end{equation*}
Using notations (\ref {S}) and (\ref {T}) it follows that
\begin{equation}\label{S'}
S'=\sum_{i=1}^5 a'_{i,i+1}=(1-m)^2S+m(1-m)T+m^2S=(1-2m(1-m))S+m(1-m)T.
\end{equation}
A reasoning similar to the one which led us to equality (\ref {delta1}) can be used to show that
\begin{equation}\label{delta2}
\Delta(K'')=\Delta(K')-m(1-m)S'.
\end{equation}
Also, relation (\ref {a13}) can be rewritten as
$a_{13}=\Delta(K)-a_{12}-a_{23}-a_{34}.$ If one expresses the area
of $K$ as
$\Delta(K)=\Delta(BCD)+\Delta(BDE)+\Delta(BEA)=a_{23}+a_{24}+a_{34}+a_{51}$
we get that
$a_{24}=\Delta(K)-a_{23}-a_{34}-a_{51}.$ In an analogous manner one
can obtain expressions for $a_{35}$, $a_{41}$ and $a_{52}$. By
adding these relations term by term and taking into account (\ref
{T}) we obtain that
\begin{equation}\label{TSD}
T=a_{13}+a_{24}+a_{35}+a_{41}+a_{52}=5\Delta(K)-3S.
\end{equation}
By eliminating the quantities $S'$, $S$ and $T$ between equalities (\ref {delta1}), (\ref{S'}), (\ref {delta2})
and (\ref {TSD}) we finally obtain a linear relationship linking the areas of $K$, $K'$ and $K''$.
%Continuing the reasoning above it can be shown that
\begin{equation}\label{recurrence}
\Delta(K'')=(2-5r)\Delta(K')-(1-5r+5r^2)\Delta(K),\qquad\qquad
{\mbox{where}}\,\,\, r:=m(1-m).
\end{equation}
In other words, if we know the area of the initial pentagon and the
area of its first $m$-Kasner descendant we can compute the area of
any of the $m$-Kasner descendants, $K^{t}$, for any $t\ge 2$.
Similar recurrence relationships are valid for polygons with more
than five sides. In general, for convex $n$-gons, the recurrence
involves $\lceil(n+1)/2\rceil$ consecutive $m$-Kasner descendants.
We omit the details.

\end{section}
%%%%%%%%%%%%%%%%%%%%%%%%%%%%%%%%%%%%%%%%%%%%%%%%%%%%%%%%%%%%%%%%%%%%%%%%%%%%%%%%%%%%%%%%%%%%%%%%%%%%%%%%%%%
%%%%%%%%%%%%%%%%%%%%%%%%%%%%%%%%%%%%%%%%%%%%%%%%%%%%%%%%%%%%%%%%%%%%%%%%%%%%%%%%%%%%%%%%%%%%%%%%%%%%%%%%%%%
\begin{section}{\bf The Pentagon Case - A Second (and Successful) Approach}

\noindent Let $K=ABCDE$ be an arbitrary convex pentagon and let $m$ be a fixed
constant in $(0,\,1)$. Denote $r:=m(1-m)$. After an
eventual relabeling of the vertices we may assume that
\begin{equation}\label{ABC}
\Delta(ABC)=\min\{\Delta(ABC),\, \Delta(BCD),\, \Delta(CDE),\,
\Delta(DEA),\, \Delta(EAB)\}
\end{equation}
\noindent In the literature, such triangles formed by three
consecutive vertices of a convex polygon are sometimes called {\it
ears}. Assumption (\ref {ABC}) above fixes the ear of least area.
Denote the intersection of $AD$ and $CE$ by $O$. Then define
$\mathbf{v}_1 = \overrightarrow{OA}$, $\mathbf{v}_2 =
\overrightarrow{OC}$. After an appropriate scaling, we may assume
that $\mathbf{v}_1\wedge \mathbf{v}_2 = \Delta(AOC) = 1$.

Since $E$, $O$, and $C$ are collinear and $D$, $O$, and $A$ are
collinear, we can write $\overrightarrow{DO} = a \cdot
\overrightarrow{OA} = a \mathbf{v}_1$ and $\overrightarrow{EO} = b
\cdot \overrightarrow{OC} = b \mathbf{v}_2$, with $a,\ b > 0$ (see
figure \ref{tdk4}).

\begin{figurehere}
\begin{center}
\scalebox{.5}{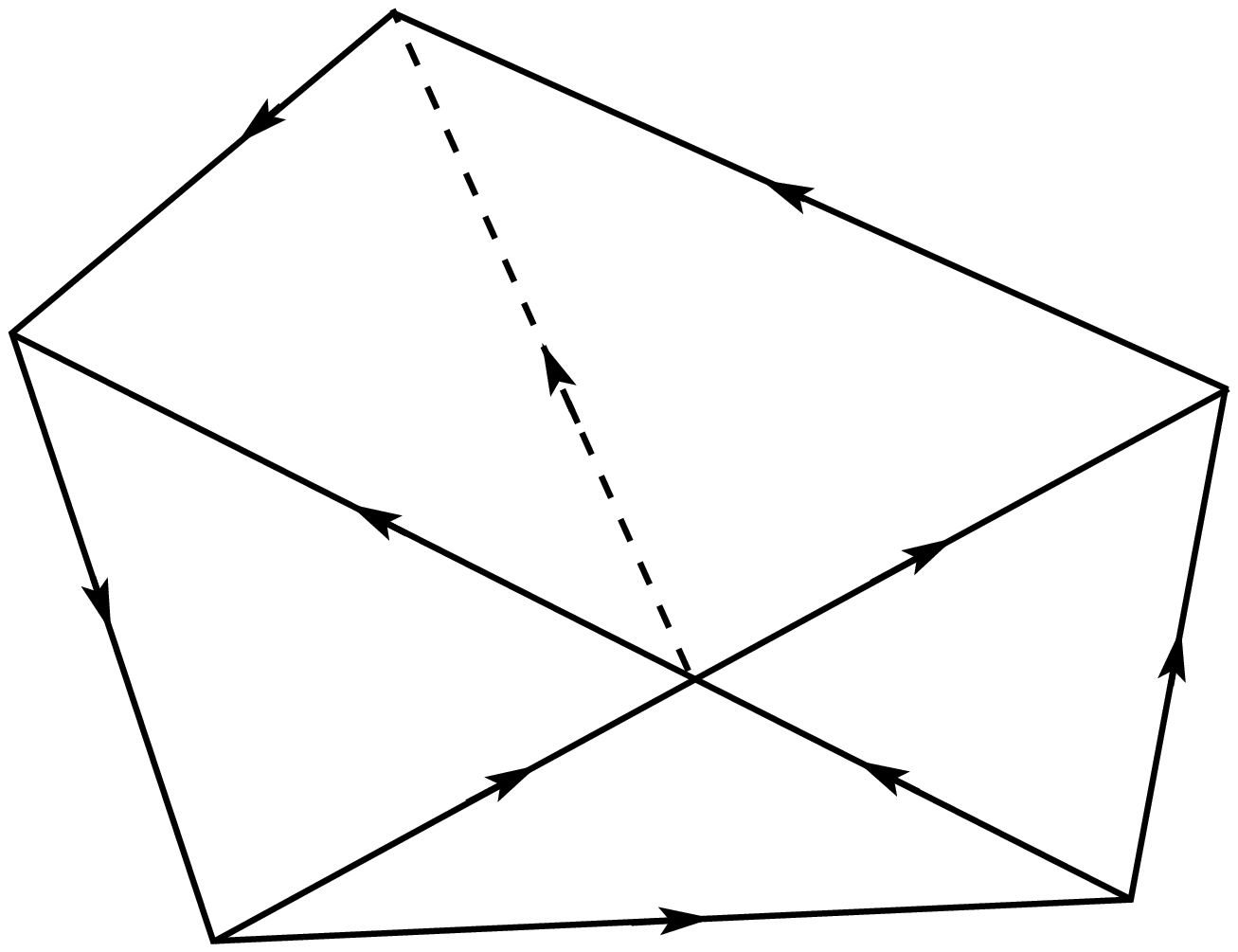} \caption{A better setup
for the convex pentagon problem } \label{tdk4}
\end{center}
\end{figurehere}

\noindent Using the triangle rule, we obtain that
$\overrightarrow{CD} = -a\mathbf{v}_1 - \mathbf{v}_2$,
$\overrightarrow{DE} = a\mathbf{v}_1 - b\mathbf{v}_2$, and
$\overrightarrow{EA} = \mathbf{v}_1 +b\mathbf{v}_2$.\\
\noindent We know that every vector in the plane can be written as a
linear combination of any two independent vectors. Set
$\overrightarrow{OB} =\mathbf{v}_3 = c \mathbf{v}_1 + d
\mathbf{v}_2$ - refer again to figure \ref{tdk4}. We also know that
$\overrightarrow{AB} = \mathbf{v}_3 - \mathbf{v}_1$ and
$\overrightarrow{BC} = \mathbf{v}_2 - \mathbf{v}_3$. We have that
\begin{eqnarray*}
\Delta(OAB) &=& \mathbf{v}_1\wedge \mathbf{v}_3 =
\mathbf{v}_1\wedge (c\mathbf{v}_1 + d\mathbf{v}_2) = d,\\
\Delta(OBC)&=& \mathbf{v}_3\wedge\mathbf{v}_2 = (c\mathbf{v}_1 +
d\mathbf{v}_2)\wedge \mathbf{v}_2 = c.
\end{eqnarray*}
After similar calculations, we can write the areas of various
triangles in pentagon $ABCDE$ in terms of the positive constants
$a,\,b,\,c,\,d$ as shown below:
 $\Delta(OCD) =a\mathbf{v}_1\wedge\mathbf{v}_2 = a, \,\,\Delta(OEA) =
a\mathbf{v}_1\wedge b\mathbf{v}_2 =ab,\,\,\Delta(ODE)=
\mathbf{v}_1\wedge b\mathbf{v}_2 =b.$ We can now compute the total
area of the pentagon.

$\Delta(ABCDE) = \Delta(OAB) + \Delta(OBC) + \Delta(OCD) +
\Delta(ODE) + \Delta(OEA)$, that is,
\begin{equation}\label{ABCDE}
\Delta(K)=\Delta(ABCDE) = a+b+c+d+ab.
\end{equation}
\noindent Next, we compute the areas of the ears of the pentagon.
\begin{eqnarray}
\Delta(ABC)&=&\overrightarrow{AB}\wedge\overrightarrow{BC}=(\mathbf{v}_3-\mathbf{v}_1)\wedge(\mathbf{v}_2-\mathbf{v}_3)=c+d-1,\label{ABC1}\\
\Delta(BCD)&=&\overrightarrow{BC}\wedge\overrightarrow{CD}=(\mathbf{v}_2-\mathbf{v}_3)\wedge(-a\mathbf{v}_1-\mathbf{v}_2)=a-ad+c,\label{BCD}\\
\Delta(CDE)&=&\overrightarrow{CD}\wedge\overrightarrow{DE}=(-a\mathbf{v}_1-\mathbf{v}_2)\wedge(a\mathbf{v}_1-b\mathbf{v}_2)=ab+a,\nonumber\\
\Delta(DEA)&=&\overrightarrow{DE}\wedge\overrightarrow{EA}=(a\mathbf{v}_1-b\mathbf{v}_2)\wedge(\mathbf{v}_1+b\mathbf{v}_2)=ab+b,\nonumber\\
\Delta(EAB)&=&\overrightarrow{EA}\wedge\overrightarrow{AB}=(\mathbf{v}_1+b\mathbf{v}_2)\wedge(\mathbf{v}_3-\mathbf{v}_1)=b-bc+d.\label{EAB}
\end{eqnarray}
It follows that
\begin{equation}\label{areaears}
\Delta(ABC)+\Delta(BCD)+\Delta(CDE)+\Delta(DEA)+\Delta(EAB)=2(a+b+c+d+ab)-1-ad-bc.
\end{equation}
Consider now $K'$, the first $m$-Kasner descendant of the initial
pentagon. We did not include $K'$ in figure \ref{tdk4} in order to
keep things clear. However, it is easy to see from figure \ref{tdk3}
that the area of $K'$ is the difference between the area of $K$ and
the sum of the areas of the ears of the pentagon multiplied by a
factor of $r=m(1-m)$. This means that
\begin{equation}\label{ear_reasoning}
\Delta(K')=\Delta(K)-r(\Delta(ABC)+\Delta(BCD)+\Delta(CDE)+\Delta(DEA)+\Delta(EAB))
\end{equation}
which after making use of (\ref{ABCDE}) and (\ref{areaears}) becomes
$\Delta(K')=(1-2r)\Delta+r(1+ad+bc)$ and finally
\begin{equation}\label{goodratio}
\frac{\Delta(K')}{\Delta(K)}=1-2r+r\frac{1+ad+bc}{a+b+c+d+ab}.
\end{equation}
In order to prove theorem \ref {thmpentagon} it is enough to show that
\begin{claim}
With the notations from the present section we have
\begin{equation}\label{claim}
0<\frac{1+ad+bc}{a+b+c+d+ab}<1
\end{equation}
and none of these inequalities can be improved.
\begin{proof}
It is clear the ratio is greater than $0$ as $a$, $b$, $c$ and $d$ are all positive.
To show that the ratio can be arbitrarily close to $0$ take $a=b=n$ and $c=d=1$.
Then,
\begin{equation*}
\frac{1+ad+bc}{a+b+c+d+ab}=\frac{2n+1}{n^2+2n+2}\rightarrow 0 \,\,\,{\mbox {as}}\,\, n\rightarrow\infty.
\end{equation*}
To show that the ratio can be arbitrarily close to $1$ take $a=n$, $b=1/n$ and $c=d=1$.
For these choices
\begin{equation*}
\frac{1+ad+bc}{a+b+c+d+ab}=\frac{n^2+n+1}{n^2+3n+1}\rightarrow 1 \,\,\,{\mbox {as}}\,\, n\rightarrow\infty.
\end{equation*}
Remains to show that the ratio is always less than $1$.
Recall that assumption (\ref {ABC}) stated that triangle $ABC$ is the ear of the smallest area.
Refer first to equality (\ref {ABC1}). Since $\Delta(ABC)>0$ it follows that $c+d>1$.
On the other hand $\Delta(ABC)\le \Delta(BCD)$ which after using (\ref{ABC1}) and (\ref{BCD}) gives that
$c+d-1\le a-ad+c\Leftrightarrow(a+1)(1-d)\ge 0\Leftrightarrow d\le 1.$
Finally $\Delta(ABC)\le \Delta(EAB)$ which after using (\ref{ABC1}) and (\ref{EAB}) implies that
$c+d-1\le b-bc+d\Leftrightarrow (b+1)(1-c)\ge 0\Leftrightarrow c\le 1.$

The last three inequalities ($c+d>1$, $c\le 1$ and $d\le 1$) imply that
\begin{equation*}
1+ad+bc\le 1+a+b<c+d+a+b<a+b+c+d+ab.
\end{equation*}
This proves that the ratio is always less that 1. This proves the claim and with it theorem \ref{thmpentagon}.
\end{proof}
\end{claim}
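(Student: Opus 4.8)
The plan is to treat the three assertions of the claim separately: strict positivity, the strict upper bound by $1$, and the sharpness of both endpoints. Positivity is immediate, since by construction $a,b,c,d$ are all positive, so both the numerator $1+ad+bc$ and the denominator $a+b+c+d+ab$ of \eqref{claim} are positive.

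The heart of the matter is the upper bound, and here I expect the main difficulty to lie. If $a,b,c,d$ were allowed to range over \emph{all} positive reals the inequality would simply be false (for instance a large $a$ together with a value $d>1$ makes $ad$ dominate and pushes the ratio above $1$), so the argument must genuinely exploit the normalization \eqref{ABC} that $ABC$ is the ear of least area. My plan is therefore to translate this geometric hypothesis into three algebraic constraints on $a,b,c,d$. First, the positivity of $\Delta(ABC)$ recorded in \eqref{ABC1} yields $c+d>1$. Next, comparing $\Delta(ABC)$ with the two \emph{neighboring} ears should pin down $c$ and $d$ individually: the inequality $\Delta(ABC)\le\Delta(BCD)$, read off from \eqref{ABC1} and \eqref{BCD}, should factor as $(a+1)(1-d)\ge 0$ and hence force $d\le 1$; symmetrically $\Delta(ABC)\le\Delta(EAB)$, via \eqref{ABC1} and \eqref{EAB}, should factor as $(b+1)(1-c)\ge 0$ and force $c\le 1$. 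A useful sanity check is that the remaining two ears $\Delta(CDE)$ and $\Delta(DEA)$ will turn out to be irrelevant, which signals that choosing the right ears to compare against is the only delicate modeling point.

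With the three inequalities $c\le 1$, $d\le 1$, and $c+d>1$ in hand, the upper bound should follow from a short chain of estimates. Since $d\le 1$ and $c\le 1$ we get $ad\le a$ and $bc\le b$, whence $1+ad+bc\le 1+a+b$; and since $c+d>1$ together with $ab>0$ we get $1+a+b<a+b+c+d<a+b+c+d+ab$. Chaining these gives $1+ad+bc<a+b+c+d+ab$, which is exactly the desired strict upper bound.

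Finally, for sharpness I would exhibit one-parameter families of admissible $(a,b,c,d)$ realizing each endpoint in the limit. To drive the ratio toward $0$ I would keep the numerator growing only linearly while forcing the $ab$ term in the denominator to dominate, e.g.\ $c=d=1$ and $a=b=n\to\infty$; to drive it toward $1$ I would instead keep $ab$ bounded while letting the linear part of the numerator catch up to the denominator, e.g.\ $a=n$, $b=1/n$, $c=d=1$. In each case one first checks that $\Delta(ABC)$ does remain the minimal ear, so the configuration is legitimate under \eqref{ABC}, and then a direct substitution into \eqref{claim} produces a rational function of $n$ tending to the claimed endpoint. Combining positivity, the strict upper bound, and these two limiting families establishes the claim and, through \eqref{goodratio}, proves Theorem~\ref{thmpentagon}.
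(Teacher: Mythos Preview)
Your proposal is correct and follows essentially the same approach as the paper: the same three algebraic constraints $c+d>1$, $d\le 1$, $c\le 1$ extracted from the minimal-ear hypothesis via \eqref{ABC1}, \eqref{BCD}, \eqref{EAB}, the same chain of estimates $1+ad+bc\le 1+a+b<a+b+c+d+ab$, and even the same sharpness families $a=b=n,\,c=d=1$ and $a=n,\,b=1/n,\,c=d=1$. Your added remark that one should verify these families keep $ABC$ as the minimal ear is a worthwhile sanity check that the paper leaves implicit.
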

\end{section}
%%%%%%%%%%%%%%%%%%%%%%%%%%%%%%%%%%%%%%%%%%%%%%%%%%%%%%%%%%%%%%%%%%%%%%%%%%%%%%%%%%%%%%%%%%%%%%%%%%%%%%%%%%%%%%%%%%%%%
%%%%%%%%%%%%%%%%%%%%%%%%%%%%%%%%%%%%%%%%%%%%%%%%%%%%%%%%%%%%%%%%%%%%%%%%%%%%%%%%%%%%%%%%%%%%%%%%%%%%%%%%%%%%%%%%%%%%%

\begin{section}{\bf The Hexagon Case}

As in the previous sections we start by fixing a constant $m$ in
$(0,\,1)$ and considering $K=ABCDEF$ an arbitrary convex hexagon. As
before, $K'$ denotes the first $m$-Kasner descendant of $K$. The
main result of this section is given in the following
\begin{thm}\label{thmhexagon}
With the notations above we have that
\begin{equation}
1-2m(1-m)<\frac{\Delta(K')}{\Delta(K)}<1.
\end{equation}
Moreover, both lower and upper bounds are the best possible.
\end{thm}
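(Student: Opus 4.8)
The plan is to reduce the whole question, exactly as in the pentagon case, to a statement about the six \emph{ears} of the hexagon. Writing $r=m(1-m)$ and repeating the ear-cutting argument that led to (\ref{ear_reasoning}), the corner of $K$ removed at each vertex when passing to $K'$ is a triangle whose area is $r$ times the corresponding ear, so that
\begin{equation*}
\Delta(K')=\Delta(K)-r\,\Sigma,\qquad \Sigma:=\Delta(ABC)+\Delta(BCD)+\Delta(CDE)+\Delta(DEF)+\Delta(EFA)+\Delta(FAB).
\end{equation*}
Thus everything comes down to controlling the single quantity $\Sigma/\Delta(K)$.

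The key observation I would exploit is a clean identity for $\Sigma$. Triangulating the convex hexagon by the diagonals $AC,CE,EA$ gives $\Delta(K)=\Delta(ACE)+\Delta(ABC)+\Delta(CDE)+\Delta(EFA)$, and triangulating by $BD,DF,FB$ gives $\Delta(K)=\Delta(BDF)+\Delta(BCD)+\Delta(DEF)+\Delta(FAB)$. Adding these and collecting the six ears yields $\Sigma=2\Delta(K)-\Delta(ACE)-\Delta(BDF)$, hence
\begin{equation*}
\frac{\Delta(K')}{\Delta(K)}=1-2r+r\cdot\frac{\Delta(ACE)+\Delta(BDF)}{\Delta(K)}.
\end{equation*}
With this formula in hand both inequalities are immediate. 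Because $K$ is convex and nondegenerate, the inscribed triangles $ACE$ and $BDF$ have strictly positive area, which forces the ratio to exceed $1-2r$. Conversely each of $ACE$, $BDF$ is a proper subtriangle of $K$, its complement being three ears of positive area, so $\Delta(ACE)<\Delta(K)$ and $\Delta(BDF)<\Delta(K)$; therefore $\Delta(ACE)+\Delta(BDF)<2\Delta(K)$ and the ratio is strictly below $1$. In short the two bounds are precisely the trivial facts $\Delta(ACE)+\Delta(BDF)>0$ and $\Sigma>0$.

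It then remains to prove that the bounds cannot be improved, and this is the only part that takes real thought: I must produce convex hexagons on which $\big(\Delta(ACE)+\Delta(BDF)\big)/\Delta(K)$ tends to $2$ and to $0$. For the upper bound I let $K$ degenerate to a triangle $PQR$ with two consecutive vertices approaching each corner, say $A,B\to P$, $C,D\to Q$, $E,F\to R$; then both $ACE$ and $BDF$ tend to $PQR$, the ratio tends to $2$, and $\Delta(K')/\Delta(K)\to1$. The lower bound requires a genuinely asymmetric degeneration which is easy to miss: I collapse four \emph{consecutive} vertices onto one corner while sending the remaining two to the other corners, e.g. $F,A,B,C\to P$ and $D\to Q$, $E\to R$, realized by a shallow convex cap of four vertices near $P$. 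Each alternating triple then contains two of the collapsing vertices — $A,C$ in one triple and $B,F$ in the other — so both $\Delta(ACE)$ and $\Delta(BDF)$ tend to $0$ while $\Delta(K)$ tends to the positive area of $PQR$; hence the ratio tends to $0$ and $\Delta(K')/\Delta(K)\to1-2r$.

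The routine verifications (that these limiting families are attained by honestly convex hexagons, and that the corner triangles really contribute the factor $r$) I would relegate to short checks; the conceptual heart of the argument is the identity $\Sigma=2\Delta(K)-\Delta(ACE)-\Delta(BDF)$, and the one step I expect to be the true obstacle is discovering the $4$–$1$–$1$ collapse that drives $\Delta(ACE)+\Delta(BDF)$ to zero, since the naive attempt to make both alternating triangles thin simultaneously is blocked by convexity.
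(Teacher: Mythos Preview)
Your argument is correct and takes a genuinely different route from the paper. The paper parametrizes the hexagon through the three long diagonals $AD$, $BE$, $CF$: their pairwise intersections form a reference triangle $MNP$, and six positive scalars $a,\dots,f$ record how far the vertices sit along these diagonals. After computing $\Delta(K)=1+S+T$ and $\Sigma=S+2T-U$ (with $S,T,U$ certain symmetric expressions in $a,\dots,f$), the theorem reduces to the algebraic claim $0<(2+S+U)/(1+S+T)<2$, with sharpness checked via the families $a=\cdots=f\to 0$ and $a=b=c=d=1,\ e=f\to\infty$. Your identity $\Sigma=2\Delta(K)-\Delta(ACE)-\Delta(BDF)$, obtained from the two alternating triangulations, is coordinate-free and makes both inequalities immediate; your pair-collapse and $4$--$1$--$1$ collapse then give sharpness with a transparent geometric picture (and indeed one can check that $2+S+U$ in the paper's notation equals $\Delta(ACE)+\Delta(BDF)$, so the two formulas coincide). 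The trade-off is that the paper's heavier parametrization is not wasted effort: it is reused at the end of the section to establish the auxiliary inequality $2\,\overrightarrow{FA}\wedge\overrightarrow{AB}\le\overrightarrow{EF}\wedge\overrightarrow{AB}+\overrightarrow{FA}\wedge\overrightarrow{BC}$, which serves as the $n=6$ base case of Lemma~\ref{lemma1} for the general $n$-gon theorem --- something your alternating-triangle identity does not directly supply.
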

We need a setup similar to the one used for pentagons. Suppose first
that the long diagonals, $AD$, $BE$, and $CF$ are not concurrent. If
these diagonals do have a common point, then perturb the position of
one of the vertices by an arbitrarily small amount so that the
diagonals are not concurrent anymore. By continuity, any inequality
which is valid in latter case is also valid in the former. Let
$M=AD\cap BE,\,N=AD\cap CF,\,P=CF\cap BE$. Denote
$\mathbf{v}_1=\overrightarrow{MN}, \mathbf{v}_2=\overrightarrow{MP}$
and $\mathbf{v}_3=\overrightarrow{NP}$. It follows that
$\mathbf{v}_3=\mathbf{v}_2-\mathbf{v}_1$ - see figure \ref{tdk5}.

\begin{figurehere}
\begin{center}
\scalebox{.5}{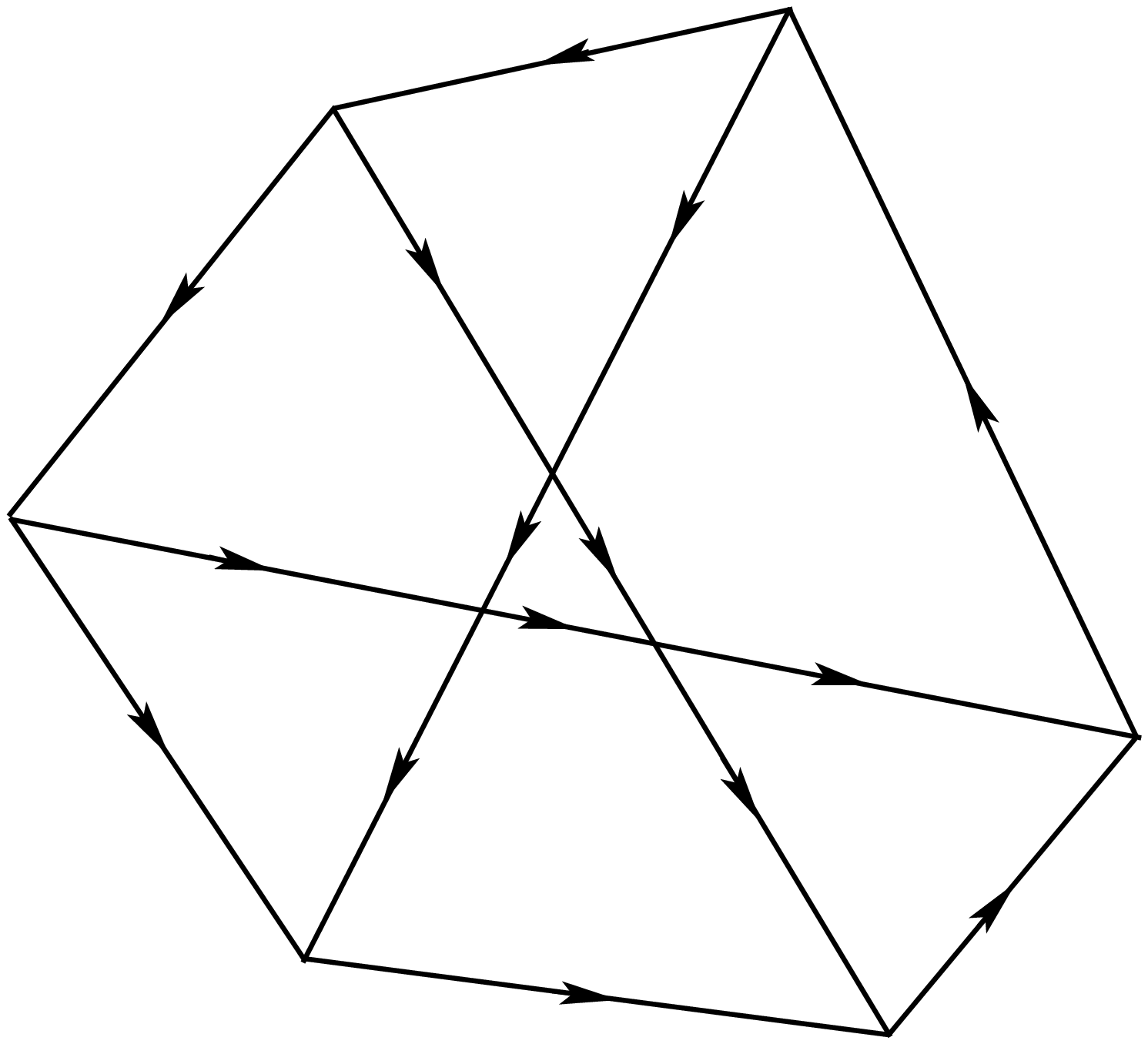} \caption{Setup for the convex
hexagon problem } \label{tdk5}
\end{center}
\end{figurehere}

With appropriate scaling we may assume that
$\Delta(MNP)=\mathbf{v}_1\wedge\mathbf{v}_2=\mathbf{v}_1\wedge\mathbf{v}_3=\mathbf{v}_2\wedge\mathbf{v}_3=
1$. Since $A,\,M,\,N,\,D$ are collinear,
$\overrightarrow{AM}=a\mathbf{v}_1,
\overrightarrow{ND}=d\mathbf{v}_1$ with $a,\,d >0$. Similarly,
$\overrightarrow{BM}=b\mathbf{v}_2,
\overrightarrow{CN}=c\mathbf{v}_3,
\overrightarrow{PE}=e\mathbf{v}_2,
\overrightarrow{PF}=f\mathbf{v}_3$ with $b,\,c,\,e,\,f$ positive
constants.

We try to express $\Delta(K)$, the area of the hexagon in terms of $a, \,b,\,c,\,d,\,e,\,f$.
We begin by computing the areas of the triangles determined by one side and two long diagonals.
\begin{eqnarray*}
\Delta(ABM)&=&\overrightarrow{AB}\wedge\overrightarrow{BM}=(a\mathbf{v}_1-b\mathbf{v}_2)\wedge b\mathbf{v}_2=ab(\mathbf{v}_1\wedge\mathbf{v}_2)=ab.\\
\Delta(CDN)&=&\overrightarrow{CD}\wedge\overrightarrow{CN}=(c\mathbf{v}_3+d\mathbf{v}_1)\wedge c\mathbf{v}_3=cd(\mathbf{v}_1\wedge\mathbf{v}_3)=cd.\\
\Delta(EFP)&=&\overrightarrow{PE}\wedge\overrightarrow{PF}=e\mathbf{v}_2\wedge f\mathbf{v}_3=ef(\mathbf{v}_2\wedge\mathbf{v}_3)=ef.\\
\Delta(BCP)&=&\overrightarrow{BC}\wedge\overrightarrow{BP}=(\mathbf{v}_1+b\mathbf{v}_2-c\mathbf{v}_3)
\wedge(b+1)\mathbf{v}_2=\\
&=&(b+1)(\mathbf{v}_1\wedge\mathbf{v}_2)-c(b+1)(\mathbf{v}_3\wedge\mathbf{v}_2)=1+b+c+bc.\\
\Delta(DEM)&=&1+d+e+de\quad{\mbox{and}}\quad \Delta(FAN)=1+f+a+fa\quad {\mbox {follow similarly}}.
\end{eqnarray*}
Since $\Delta(K)=\Delta(ABM)+\Delta(CDN)+\Delta(EFP)+\Delta(BCP)+\Delta(DEM)+\Delta(FAN)-2\Delta(MNP)$
by using the expressions above and the fact that $\Delta(MNP)=1$ we get that

\begin{equation}\label{ABCDEF}
\Delta(K)=1+(a+b+c+d+e+f)+(ab+bc+cd+de+ef+fa).
\end{equation}
Next let us compute the areas of the ears of the hexagon $ABCDEF$.
Only the computation for the first triangle is shown in detail; the
others can be obtained via circular permutations.
\begin{eqnarray*}
\Delta(ABC)&=&\overrightarrow{AB}\wedge\overrightarrow{BC}
=(a\mathbf{v}_1-b\mathbf{v}_2)\wedge(\mathbf{v}_1+b\mathbf{v}_2-c\mathbf{v}_3)=\\
&=&ab(\mathbf{v}_1\wedge\mathbf{v}_2)-ac(\mathbf{v}_1\wedge\mathbf{v}_3)
-b(\mathbf{v}_2\wedge\mathbf{v}_1)+bc(\mathbf{v}_2\wedge\mathbf{v}_3)=\\
&=&ab-ac+b+bc=b(1+a+c)-ac.\\
\Delta(BCD)&=&c(1+b+d)-bd;\quad \Delta(CDE)=d(1+c+e)-ce.\\
\Delta(DEF)&=&e(1+d+f)-df;\quad \Delta(EFA)=f(1+e+a)-ea.\\
\Delta(FAB)&=&a(1+b+f)-fb.
\end{eqnarray*}
At this point let us introduce a few simplifying notations
\begin{eqnarray}\label{STU}
S:&=&a+b+c+d+e+f,\,\,T:=ab+bc+cd+de+ef+fa\\
U:&=&ac+bd+ce+df+ea+fb.
\end{eqnarray}
It follows that the sum of the areas of all the ears
\begin{equation}\label{sumears}
\Delta(ABC)+\Delta(BCD)+\Delta(CDE)+\Delta(DEF)+\Delta(EFA)+\Delta(FAB)=S+2T-U
\end{equation}
while using (\ref{ABCDEF}) the area of the initial hexagon can be written as
$\Delta(K)=1+S+T.$

If $K'$ denotes the first $m$-Kasner descendant of the hexagon $K$,
then a reasoning identical to the one that lead to equality (\ref
{ear_reasoning}) implies that

$\Delta(K')=\Delta(K)-r\left[\Delta(ABC)+\Delta(BCD)+\Delta(CDE)+\Delta(DEF)+\Delta(EFA)+\Delta(FAB)\right]$.

Using the last five equalities after a few straightforward algebraic manipulations we obtain that
\begin{equation}\label{ratiohex}
\frac{\Delta'}{\Delta}=(1-2r)+r\cdot\frac{2+S+U}{1+S+T}.
\end{equation}
It follows that theorem \ref {thmhexagon} will be proved as soon as we can show that
\begin{claim}
With the notations from the current section we have
\begin{equation}
0<\frac{2+S+U}{1+S+T}<2
\end{equation}and none of the above inequalities can be improved.
\end{claim}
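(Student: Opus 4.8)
The plan is to treat the two inequalities separately and then confirm sharpness by explicit families. The left inequality costs nothing: since $a,b,c,d,e,f>0$, the symmetric sums $S,T,U$ from (\ref{STU}) are positive, so both the numerator $2+S+U$ and the denominator $1+S+T$ of the ratio are positive (the numerator is in fact at least $2$). Thus $\frac{2+S+U}{1+S+T}>0$ automatically.

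For the right inequality I would clear the positive denominator: $\frac{2+S+U}{1+S+T}<2$ is equivalent to $S+2T-U>0$. The key observation is that $S+2T-U$ is precisely the total area of the six ears, as recorded in (\ref{sumears}). Because $K$ is convex, each of $\Delta(ABC),\dots,\Delta(FAB)$ is a positively oriented triangle of strictly positive area, so their sum is strictly positive. This settles the upper bound for every convex hexagon at once; notably, in contrast with the pentagon argument, no ``smallest ear'' normalization is required.

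To see that neither bound can be improved, I would exhibit two one-parameter families. For the upper bound, set $a=b=c=d=e=f=\varepsilon$: then $S=6\varepsilon$ and $T=U=6\varepsilon^2$, so the ratio is $\frac{2+6\varepsilon+6\varepsilon^2}{1+6\varepsilon+6\varepsilon^2}\to 2$ as $\varepsilon\to 0^{+}$ (geometrically, the hexagon shrinks onto the central triangle $MNP$ and the ears vanish). For the lower bound, set $a=b=n$ and $c=d=e=f=1$: then $S=2n+4$, $T=n^2+2n+3$, $U=4n+2$, giving the ratio $\frac{6n+8}{n^2+4n+8}\to 0$ as $n\to\infty$, since the single product $ab=n^2$ dominates the denominator while the numerator stays linear. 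In both families the ear formulas give strictly positive values for all six ears (for instance the second family yields $\Delta(ABC)=\Delta(FAB)=n^2+n$ and the remaining four ears equal to $2$), so the hexagons are genuinely convex.

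The delicate point---and the main obstacle---is not any of the inequalities but verifying that the sharpness families remain convex, especially the lower-bound family in which $a,b\to\infty$ might be feared to destroy convexity or simplicity. The remedy is exactly the positivity of all six ears: positive ears mean every vertex is a counterclockwise turn, and since by construction the six vertices are arranged once around the triangle $MNP$, this certifies convexity. Checking these six positivity conditions for each family is therefore the only real computation, and it goes through immediately; feeding the resulting bounds into (\ref{ratiohex}) then yields Theorem \ref{thmhexagon}.
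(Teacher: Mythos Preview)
Your argument is correct and matches the paper's almost exactly: the lower bound by positivity, the two sharpness families (your $a=b=n$, $c=d=e=f=1$ is a cyclic relabeling of the paper's $a=b=c=d=1$, $e=f=n$, yielding the identical ratio $\frac{6n+8}{n^2+4n+8}$), and the all-equal family for the upper limit. The one cosmetic difference is the upper bound itself: the paper invokes $\Delta(K')<\Delta(K)$ directly in (\ref{ratiohex}), while you unpack that same inequality as $S+2T-U>0$ via (\ref{sumears}); since $\Delta(K)-\Delta(K')=r(S+2T-U)$, the two are equivalent.
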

\begin{proof} It is obvious that the ratio is greater than $0$ as $a,\,b,\,c,\,d,\,e,\,f$ are all positive.
To show it can be arbitrarily close to $0$ take $a=b=c=d=1$ and $e=f=n$. It is easy to check that for these
choices the resulting hexagon is convex for all values of $n\ge 1$. Moreover,
\begin{equation*}
\frac{2+S+U}{1+S+T}=\frac{6n+8}{n^2+4n+8}\rightarrow 0 \,\,{\mbox {as}}\,\,n\rightarrow\infty.
\end{equation*}
To prove that the ratio can be arbitrarily close to $2$ take $a=b=c=d=e=f$. Again, it is simple to verify that the resulting hexagon is convex for any value of $a>0$. We have that,
\begin{equation*}
\frac{2+S+U}{1+S+T}=\frac{2+6a+6a^2}{1+6a+6a^2} \rightarrow 2 \,\,{\mbox {as}}\,\,a \rightarrow 0.
\end{equation*}
Finally, since $\Delta(K')<\Delta(K)$, from (\ref{ratiohex}) it follows that the ratio $(2+S+U)/(1+S+T)<2$.

This proves the claim and with it theorem \ref{thmhexagon}.
\end{proof}
The following result is going to be needed later. By the symmetry of
figure \ref {tdk5}, we may assume that $a=\min
\{a,\,b,\,c,\,d,\,e,\,f\}$. Then the following is true
\begin{equation}\label{FABC}
2\cdot\overrightarrow{FA}\wedge\overrightarrow{AB}\le\overrightarrow{EF}\wedge\overrightarrow{AB}+
\overrightarrow{FA}\wedge\overrightarrow{BC}.
\end{equation}
It is easy to check that
$\overrightarrow{FA}\wedge\overrightarrow{AB}=a+ab+af-bf$,
$\overrightarrow{EF}\wedge\overrightarrow{AB}=ae-af+bf$ and
$\overrightarrow{FA}\wedge\overrightarrow{BC}=1+c+f-ab+ac+bf$. Then,
after some algebra, inequality (\ref{FABC}) becomes equivalent to
\begin{equation*}
0\le 1-2a+c+f-3ab+ac+ae-3af+4bf.
\end{equation*}
Since $a=\min \{a,\,b,\,c,\,d,\,e,\,f\}$ we may express $b=a+x_1$,
$c=a+x_2$, $e=a+x_3$ and $f=a+x_4$, where the $x_i$-s are
nonnegative numbers. The last inequality is then equivalent to
\begin{equation*}
0\le 1+x_2+x_4+a(x_1+x_3+x_3+x_4)+4x_1x_4
\end{equation*}
which is obviously true. This proves inequality (\ref{FABC}).

\end{section}

%%%%%%%%%%%%%%%%%%%%%%%%%%%%%%%%%%%%%%%%%%%%%%%%%%%%%%%%%%%%%%%%%%%%%%%%%%%%%%%%%%%%%%%%%%%%%%%%%%%%%%%%%%%%%%%%%%%%
%%%%%%%%%%%%%%%%%%%%%%%%%%%%%%%%%%%%%%%%%%%%%%%%%%%%%%%%%%%%%%%%%%%%%%%%%%%%%%%%%%%%%%%%%%%%%%%%%%%%%%%%%%%%%%%%%%%%
%%%%%%%%%%%%%%%%%%%%%%%%%%%%%%%%%%%%%%%%%%%%%%%%%%%%%%%%%%%%%%%%%%%%%%%%%%%%%%%%%%%%%%%%%%%%%%%%%%%%%%%%%%%%%%%%%%%%
\begin{section}{\bf The Case of the Convex $n$-gon when $n\ge 7$}

Let $m$ be a fixed constant in $(0,\,1)$ and let $K=A_1A_2\ldots
A_n$ be a convex $n$-gon, with $n\ge 7$. Let $K'=B_1B_2\ldots B_n$
be the first $m$-Kasner descendant of $K$, that is, for every
$i=1\ldots n$ point $B_i$ is lies along side $A_iA_{i+1}$ such that
$A_iB_i:B_iA_{i+1}=m:(1-m)$ . The main result of this section is
given by the following
\begin{thm}\label{thmngon}
With the above notations we have that
\begin{equation}
1-2m(1-m)<\frac{\Delta(K')}{\Delta(K)}<1
\end{equation}
and none of the above inequalities can be improved.
\end{thm}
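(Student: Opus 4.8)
The plan is to recast both inequalities in terms of the \emph{ears} of $K$ and then argue by induction on $n$. Exactly as in the reasoning that produced (\ref{ear_reasoning}), if we write $\Sigma(K):=\sum_{i=1}^{n}\Delta(A_{i-1}A_iA_{i+1})$ for the total area of the $n$ ears of $K$, then $\Delta(K')=\Delta(K)-r\,\Sigma(K)$ with $r=m(1-m)$, so that
\[
\frac{\Delta(K')}{\Delta(K)}=1-r\,\frac{\Sigma(K)}{\Delta(K)}.
\]
Since $K$ is convex every ear has positive area, whence $\Sigma(K)>0$ and the upper bound $\Delta(K')/\Delta(K)<1$ is immediate. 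Thus, apart from sharpness, the whole theorem reduces to the single estimate $\Sigma(K)<2\Delta(K)$, which is precisely equivalent to the lower bound $\Delta(K')/\Delta(K)>1-2r$. I would prove $\Sigma(K)<2\Delta(K)$ for every convex polygon with at least five vertices.

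The base cases $n=5$ and $n=6$ are already available: there $\Sigma(K)<2\Delta(K)$ is exactly the lower bounds of Theorems \ref{thmpentagon} and \ref{thmhexagon}. For the inductive step, take a convex $n$-gon with $n\ge 7$ and relabel the vertices so that the ear at $A_1$, namely $E_1:=\overrightarrow{A_nA_1}\wedge\overrightarrow{A_1A_2}=\Delta(A_nA_1A_2)$, has the least area among all $n$ ears. Deleting $A_1$ yields a convex $(n-1)$-gon $K^-=A_2A_3\ldots A_n$, with $\Delta(K)=\Delta(K^-)+E_1$. Only the two ears neighbouring the deleted vertex change, so a short bilinear computation with the wedge product gives the reduction identity
\[
\Sigma(K)=\Sigma(K^-)+E_1-\overrightarrow{A_{n-1}A_n}\wedge\overrightarrow{A_1A_2}-\overrightarrow{A_nA_1}\wedge\overrightarrow{A_2A_3}.
\]

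Here the inequality (\ref{FABC}) proved at the close of the previous section is exactly what is needed: with $A_1$ in the role of $A$ (and $A_n,A_{n-1}$ playing $F,E$, while $A_2,A_3$ play $B,C$) it reads $2E_1\le \overrightarrow{A_{n-1}A_n}\wedge\overrightarrow{A_1A_2}+\overrightarrow{A_nA_1}\wedge\overrightarrow{A_2A_3}$, so the two subtracted terms dominate $2E_1$ and the reduction identity yields $\Sigma(K)\le\Sigma(K^-)-E_1$. Feeding in the inductive hypothesis $\Sigma(K^-)<2\Delta(K^-)$ together with $E_1>0$,
\[
\Sigma(K)\le\Sigma(K^-)-E_1<2\Delta(K^-)-E_1<2\bigl(\Delta(K^-)+E_1\bigr)=2\Delta(K),
\]
which closes the induction. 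The delicate point — and what I expect to be the crux — is the \emph{applicability} of (\ref{FABC}): that inequality was established under the minimality hypothesis ($a=\min$ in the hexagon parametrization), so I must justify that deleting the vertex of least ear really places the five relevant vertices in that regime. I would handle this either by completing $A_{n-1},A_n,A_1,A_2,A_3$ to a convex hexagon whose sixth vertex sits opposite $A_1$, chosen so that the minimality of the ear at $A_1$ becomes the hypothesis of (\ref{FABC}), or by re-running the local computation of (\ref{FABC}) directly from the assumption that this ear is minimal. Everything else is bookkeeping.

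For sharpness I would exhibit two degenerating families of convex $n$-gons. To force $\Delta(K')/\Delta(K)\to 1$ (equivalently $\Sigma(K)/\Delta(K)\to 0$), partition the $n$ vertices into three groups, each of size at least two, and let each group collapse onto a distinct vertex of a fixed triangle; then every three consecutive vertices contain two that coincide in the limit, so all ears tend to $0$ while $\Delta(K)$ tends to the area of the triangle. To force $\Delta(K')/\Delta(K)\to 1-2r$ (equivalently $\Sigma(K)/\Delta(K)\to 2$), keep two adjacent vertices $U,V$ fixed and let the remaining $n-2$ vertices collapse onto a point $W$ with $U,V,W$ spanning a nondegenerate triangle; the ears at $U$ and at $V$ then each tend to $\Delta(UVW)$ while every other ear tends to $0$, so $\Sigma(K)\to 2\Delta(UVW)$ and $\Delta(K)\to\Delta(UVW)$. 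Both families stay strictly convex throughout, so neither bound can be improved.
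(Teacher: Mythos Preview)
Your overall architecture is the paper's: reduce the lower bound to $\Sigma(K)<2\Delta(K)$, then induct on $n$ by deleting a well-chosen vertex so that the ear sum does not increase. Your sharpness families are fine; the lower-bound family is essentially the paper's construction, and your three-cluster family for the upper bound is a valid alternative to the paper's inductive construction in part~\textbf{iii}.

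The genuine gap is exactly the step you flag as ``the crux'': you delete the vertex of \emph{smallest ear} and then invoke (\ref{FABC}). But (\ref{FABC}) was proved for a hexagon under the hypothesis $a=\min\{a,b,c,d,e,f\}$, where $a,\ldots,f$ are ratios along the three long diagonals in the parametrization of figure~\ref{tdk5}. That hypothesis is \emph{not} ``the ear at $A$ is smallest''; in the hexagon notation the ear at $A$ is $\Delta(FAB)=a(1+b+f)-bf$, and minimality of this quantity has no direct bearing on whether $a$ is the smallest of the six diagonal parameters. Your proposed rescues --- embed $A_{n-1},A_n,A_1,A_2,A_3$ in a hexagon with $a=\min$, or ``re-run the local computation'' from the smallest-ear assumption --- are not worked out, and there is no evident reason either should succeed: the algebra behind (\ref{FABC}) depends on the global hexagon diagonal structure, which five consecutive vertices of an $n$-gon need not see.

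What the paper does instead is abandon the smallest-ear heuristic entirely and prove (Lemma~\ref{lemma1}) that \emph{some} run of four consecutive sides satisfies the weaker inequality $a_{i+1,i+2}\le a_{i,i+2}+a_{i+1,i+3}$. The argument is not local: it uses the Pl\"ucker identity $a_{23}a_{45}-a_{24}a_{35}+a_{25}a_{34}=0$ together with a case split on the sign of $a_{i,i+3}$ to force the inequality at $i=1$ or $i=3$. Deleting the vertex singled out by this lemma then gives $\Sigma(K)\le\Sigma(K^-)$ (only the factor-$1$ version is needed, not the factor-$2$ version you aim for), and your induction closes exactly as you wrote. So your framework is correct, but the vertex-selection step needs the Pl\"ucker argument of Lemma~\ref{lemma1}, not an appeal to (\ref{FABC}).
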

While this result was to be expected (given the statement of theorem \ref {thmhexagon}) , a rigorous proof
still requires some work and inspiration.  We are going to need a couple of intermediate results.

\begin{lemma}\label{lemma1}
Consider a positively oriented convex $n$-gon $K=A_1A_2A_3\ldots
A_n$, $n\geq6$, and denote
$\overrightarrow{A_iA_{i+1}}=\mathbf{v}_i$. Then there exists four
consecutive sides $\mathbf{v}_i$, $\mathbf{v}_{i+1}$,
$\mathbf{v}_{i+2}$ and $\mathbf{v}_{i+3}$ such that
\begin{equation}\label{1234}
\mathbf{v}_{i+1}\wedge \mathbf{v}_{i+2}\leq  \mathbf{v}_{i}\wedge
\mathbf{v}_{i+2}+\mathbf{v}_{i+1}\wedge \mathbf{v}_{i+3}.
\end{equation}
\end{lemma}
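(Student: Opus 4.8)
The plan is to convert the wedge inequality (\ref{1234}) into a statement purely about the \emph{ear} areas of $K$, and then to pick the index $i$ by a minimality principle, exactly parallel to fixing the ear of least area in assumption (\ref{ABC}) for the pentagon and to the choice $a=\min\{a,\dots,f\}$ used to derive (\ref{FABC}) for the hexagon. Write $E_j:=\mathbf{v}_{j-1}\wedge\mathbf{v}_j$ for the area of the ear $A_{j-1}A_jA_{j+1}$, and $Q_i$ for the area of the convex quadrilateral $A_iA_{i+1}A_{i+2}A_{i+3}$. Splitting this quadrilateral along each of its diagonals and using bilinearity of the wedge product, exactly as in Sections 2 and 3, gives the identity $\mathbf{v}_i\wedge\mathbf{v}_{i+2}=Q_i-E_{i+1}-E_{i+2}$, together with its shift $\mathbf{v}_{i+1}\wedge\mathbf{v}_{i+3}=Q_{i+1}-E_{i+2}-E_{i+3}$. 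Substituting these two identities into (\ref{1234}) shows that (\ref{1234}) is \emph{equivalent} to
\[
Q_i+Q_{i+1}\ \ge\ E_{i+1}+3E_{i+2}+E_{i+3},
\]
an inequality in which every quantity is a genuine positive area.

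I would then choose the index so that $A_{i+2}$ is a vertex of \emph{least ear area}, i.e. $E_{i+2}=\min_j E_j$; in particular the two flanking ears satisfy $E_{i+1}\ge E_{i+2}$ and $E_{i+3}\ge E_{i+2}$. The remaining task is to show that the two quadrilaterals $Q_i$, $Q_{i+1}$ are large enough to absorb the surplus $3E_{i+2}$. The key tool here is Pl\"ucker's identity (\ref{plucker}) applied to the four consecutive sides, which reads
\[
(\mathbf{v}_i\wedge\mathbf{v}_{i+2})\,(\mathbf{v}_{i+1}\wedge\mathbf{v}_{i+3})\ =\ E_{i+1}E_{i+3}+(\mathbf{v}_i\wedge\mathbf{v}_{i+3})\,E_{i+2}.
\]
When the two ``skip-one'' wedges $\mathbf{v}_i\wedge\mathbf{v}_{i+2}$, $\mathbf{v}_{i+1}\wedge\mathbf{v}_{i+3}$ and the ``skip-two'' wedge $\mathbf{v}_i\wedge\mathbf{v}_{i+3}$ are all nonnegative, combining this identity with the AM--GM inequality and with $E_{i+2}^2\le E_{i+1}E_{i+3}$ gives
\[
\mathbf{v}_i\wedge\mathbf{v}_{i+2}+\mathbf{v}_{i+1}\wedge\mathbf{v}_{i+3}\ \ge\ 2\sqrt{E_{i+1}E_{i+3}}\ \ge\ 2E_{i+2},
\]
which is even stronger than (\ref{1234}) and recovers the factor-$2$ phenomenon already seen in (\ref{FABC}).

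The main obstacle — and the precise reason the statement fails for pentagons — is the sign of these skip wedges. The quantity $\mathbf{v}_i\wedge\mathbf{v}_{i+2}$ is negative exactly when the two exterior angles at $A_{i+1}$ and $A_{i+2}$ sum to more than $\pi$, and a convex polygon may perfectly well contain such sharply turning pairs, so the clean case above need not occur at a given vertex. The essential use of the hypothesis $n\ge6$ is that the exterior angles sum to $2\pi$ and hence average at most $\pi/3$, so sharp turns cannot occur everywhere; for $n=5$ the average jumps above the threshold, which is consistent with the genuinely different pentagon bound in Theorem~\ref{thmpentagon}. I expect the proof to finish with a short case analysis on the signs of $\mathbf{v}_i\wedge\mathbf{v}_{i+2}$, $\mathbf{v}_{i+1}\wedge\mathbf{v}_{i+3}$ and $\mathbf{v}_i\wedge\mathbf{v}_{i+3}$ at the minimal-ear vertex, each negative case being estimated away using Pl\"ucker's identity together with $E_{i+1},E_{i+3}\ge E_{i+2}$. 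The hardest point is ruling out the configuration in which the least-ear vertex sits between two jointly over-sharp turns; the closure relation $\sum_k\mathbf{v}_k=\mathbf{0}$ and convexity should force a length imbalance at such a vertex that contradicts its ear being minimal, and making this last step quantitative is where the real work lies.
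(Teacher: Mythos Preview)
Your reformulation of (\ref{1234}) in terms of ear areas and the Pl\"ucker/AM--GM argument in the ``all signs positive'' case are correct and attractive, but the proposal is genuinely incomplete: you yourself say that handling the configurations where the skip wedges $\mathbf{v}_i\wedge\mathbf{v}_{i+2}$, $\mathbf{v}_{i+1}\wedge\mathbf{v}_{i+3}$ or $\mathbf{v}_i\wedge\mathbf{v}_{i+3}$ are negative ``is where the real work lies,'' and none of that work is done. The minimal-ear choice controls $E_{i+1},E_{i+3}\ge E_{i+2}$ but gives no direct handle on the signs of the skip wedges at that vertex, and the closure/exterior-angle heuristic you sketch does not obviously localize to the minimal-ear window. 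So as it stands there is a real gap, not just a routine case check.

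It is worth knowing that the paper's proof avoids exactly this difficulty by \emph{not} fixing a single window via a minimality principle when $n\ge 7$. Instead it splits globally on whether all the skip-two wedges $a_{i,i+3}$ are nonnegative. If they are, the paper assumes for contradiction that (\ref{1234}) fails at \emph{two} overlapping windows, $i=1$ and $i=3$; these failures give $0<a_{24}<a_{23}$ and $0<a_{35}<a_{45}$, whose product contradicts the Pl\"ucker relation $a_{23}a_{45}-a_{24}a_{35}+a_{25}a_{34}=0$ since $a_{25},a_{34}>0$. If some $a_{i,i+3}<0$, then $n-2\ge 5$ (or $6$) consecutive edge vectors lie in a common half-plane, which forces all the needed wedges positive at some window and reduces to the previous case. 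The hexagon case $n=6$ is handled separately via the explicit parametrization of Section~6 (inequality (\ref{FABC})). The contrast with your approach is instructive: rather than trying to force one carefully chosen window to work, the paper plays two windows against each other through Pl\"ucker, which is what eliminates the delicate sign analysis you were anticipating.
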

\begin{proof}

The case when $K$ is a hexagon has already been proved at the end of
the previous section. In fact, after an appropriate relabeling,
($ABCDEF$ becomes $A_3A_4A_5A_6A_1A_2$) inequality (\ref {FABC})
states that the stronger inequality $2\mathbf{v}_{2}\wedge
\mathbf{v}_{3}\leq  \mathbf{v}_{1}\wedge
\mathbf{v}_{3}+\mathbf{v}_{2}\wedge \mathbf{v}_{4}$ holds true.
Recall that $\mathbf{v}_2\wedge\mathbf{v}_3=\Delta(A_2A_3A_4)>0$ by
convexity.

Suppose now that $n\ge 7$ and denote by $a_{ij}=\mathbf{v}_i\wedge
\mathbf{v}_j$ for all $1\le i,\,j\le n$. We need to show that
\begin{equation}\label{i}
a_{i+1,i+2}\le a_{i,i+2}+a_{i+1,i+3} \quad {\mbox {for some}}\quad
i=1\ldots n.
\end{equation}

{\bf Case 1.}
Suppose that $\mathbf{v}_i\wedge\mathbf{v}_{i+3}\geq0$ for all $i=1\ldots n$\\
Since $a_{i,i+3}\geq 0$, it follows that $a_{i,i+2}>0$ for all
$i=1\ldots n$. Recall that $a_{i,i+1}>0$ by convexity.

In particular
\begin{equation}\label{cond}
a_{13}>0,\,a_{14}\ge 0,\,a_{24}>0,\,a_{25}\ge
0,\,a_{35}>0,\,a_{46}\ge 0.
\end{equation}

Suppose for the sake of contradiction that inequality (\ref {i})
does not hold for $i=1$ or $i=3$. Given (\ref{cond}) this means that
$a_{13}+a_{24}<a_{23}$ from which $0<a_{24}<a_{23}$. Similarly,
$a_{35}+a_{46}<a_{45}$, that is, $ 0<a_{35}<a_{45}.$ Multiplying the
last two inequalities term by term we obtain that
$a_{24}a_{35}<a_{23}a_{45}.$

But Pl\"{u}cker's identity (\ref {plucker}) applied to the indices
$2$, $3$, $4$ and $5$ gives
$a_{23}a_{45}-a_{24}a_{35}+a_{25}a_{34}=0$.

Combining the last two relations it follows that $a_{25}a_{34}<0$
which contradicts (\ref {cond}).

{\bf Case 2.} Suppose that $\mathbf{v}_i\wedge\mathbf{v}_{i+3}<0$
for some $i$ in $\{1, \,2,\,\ldots,\,n\}$. With no loss of
generality say $\mathbf{v}_{n-2}\wedge\mathbf{v}_1< 0$. Then all the
vectors $\mathbf{v}_1$, $\mathbf{v}_2\ldots$, $\mathbf{v}_{n-2}$
belong to the same half-plane - see figure \ref{tdk6}.

If $n\ge 8$, this means that all the vectors $\mathbf{v}_i$, with
$1\le i\le 6$ belong to the same half-plane. This implies that
$\mathbf{v}_i\wedge \mathbf{v}_j >0$ for all $1\le i<j\le 6$ and
therefore all the conditions from (\ref {cond}) are satisfied. Now
we can just repeat the reasoning from case 1 to obtain the desired
conclusion.
\begin{figurehere}
\begin{center}
\scalebox{.51}{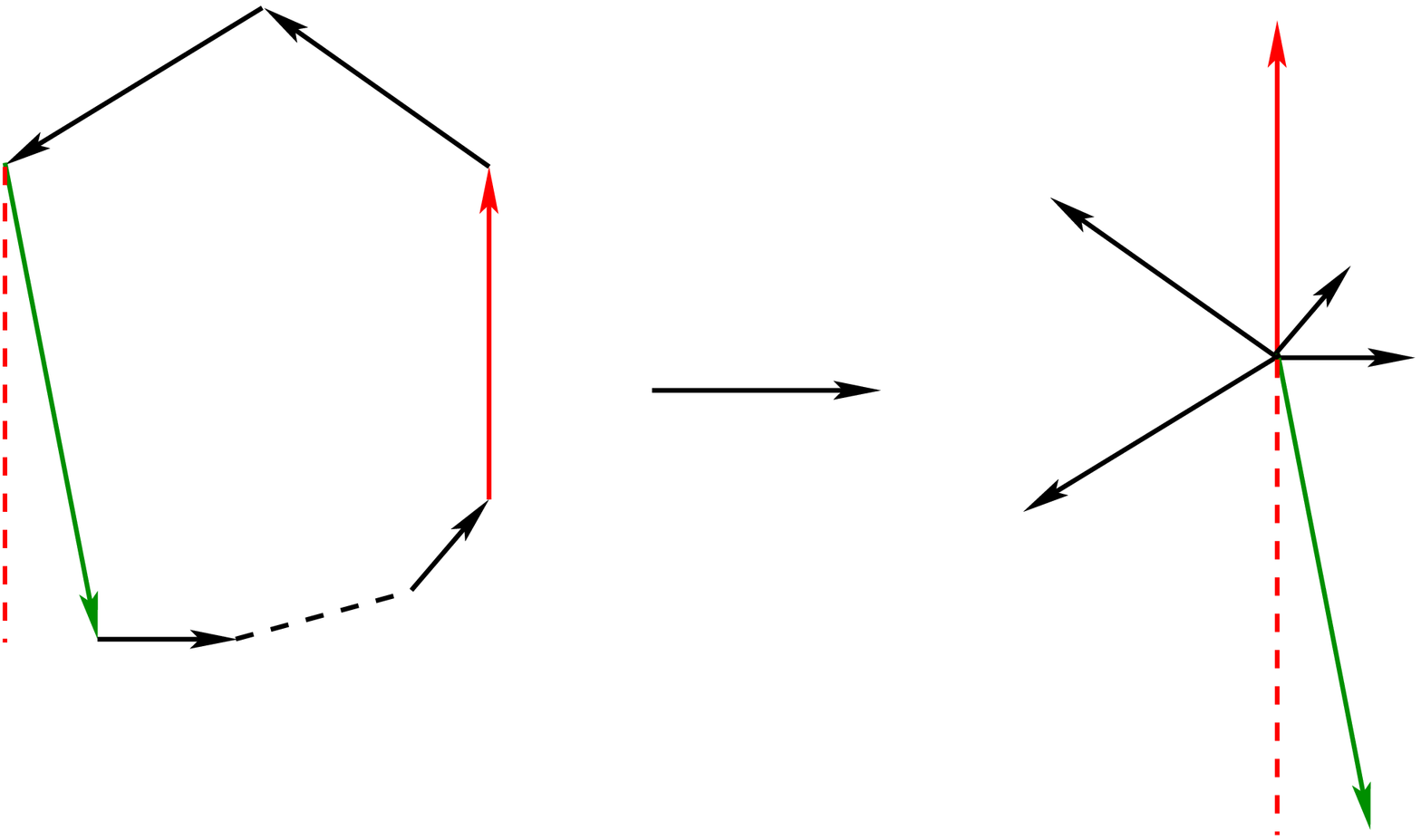} \caption{Lemma 7.2, Case 2,
$n\ge 8$} \label{tdk6}
\end{center}
\end{figurehere}
It remains to see what happens if $n=7$. We still have all the
vectors $\mathbf{v}_i$, with $1\le i\le 5$ lying in the same
half-plane - see figure \ref{tdk7}.

\begin{figurehere}
\begin{center}
\scalebox{.5}{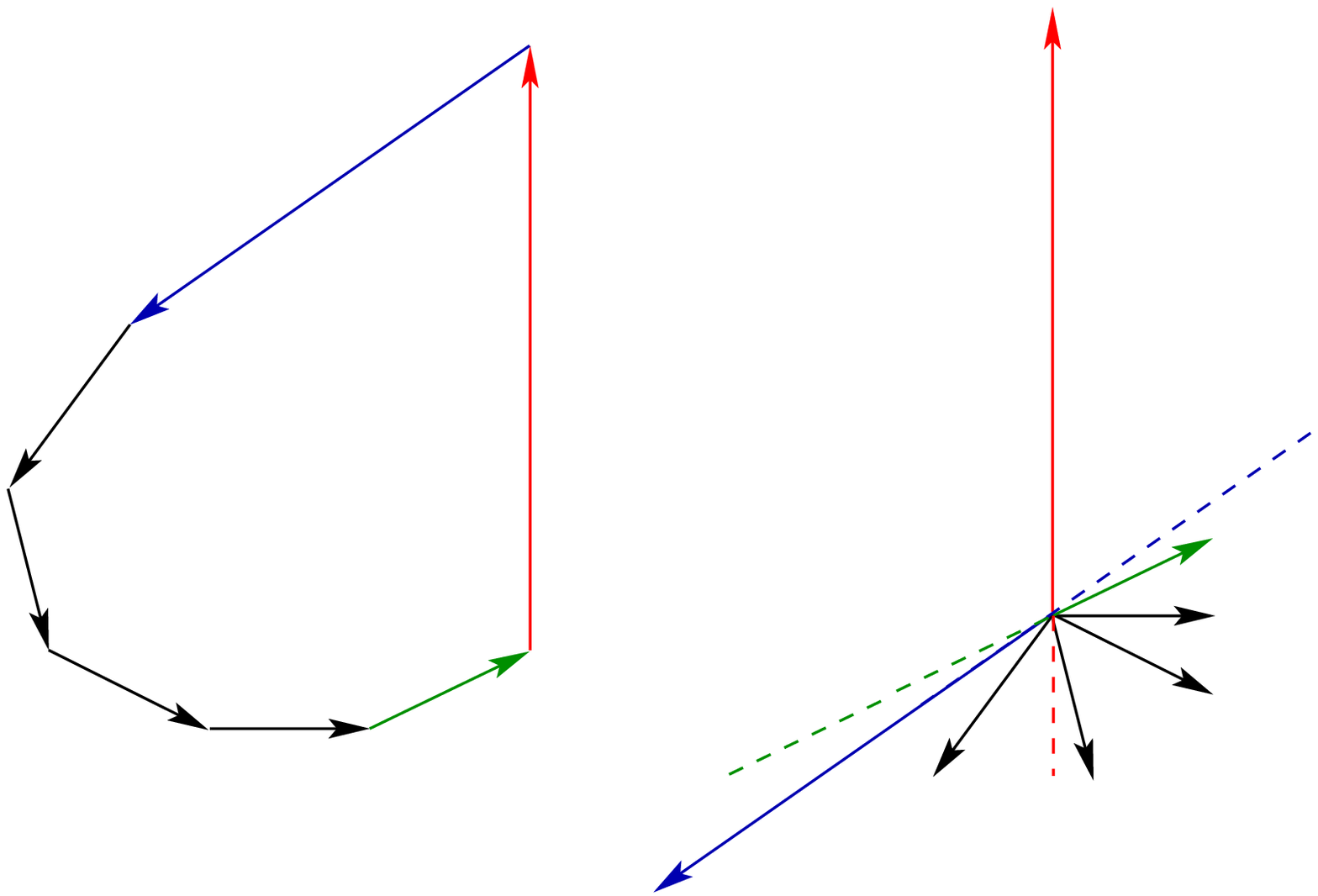} \caption{Lemma 7.2, Case 2,
$n=7$} \label{tdk7}
\end{center}
\end{figurehere}
This means that $\mathbf{v}_i\wedge \mathbf{v}_j >0$ for all $1\le
i<j\le 5$. If $\mathbf{v}_4\wedge \mathbf{v}_6 \ge 0$ then all the
inequalities from (\ref {cond}) are satisfied and we are done. If
$\mathbf{v}_4\wedge \mathbf{v}_6 < 0$ this implies that we have six
consecutive vectors - $\mathbf{v}_6$, $\mathbf{v}_7$,
$\mathbf{v}_1$, $\mathbf{v}_2$, $\mathbf{v}_3$, $\mathbf{v}_4$,
lying in the same half-plane. But this case has been dealt with a
bit earlier.

\end{proof}
We need one more result before we can proceed with the proof of
theorem \ref {thmngon}
\begin{lemma}\label{lemma2}
Let $m$ in $(0,\,1)$ be a fixed constant and let $K=A_1A_2\ldots
A_nA_{n+1}$ be a positively oriented convex $(n+1)$-gon, $n\ge 6$.
Let $K'=B_1B_2\ldots B_nB_{n+1}$ be the first $m$-Kasner descendant
of $K$. Then there exists a convex $n$-gon $L$, obtained by removing
a certain vertex of $K$, such that
\begin{equation}\label{KL}
\frac{\Delta(K')}{\Delta(K)}\ge\frac{\Delta(L')}{\Delta(L)}
\end{equation}
where $L'$ is the first $m$-Kasner descendant of $L$.
\end{lemma}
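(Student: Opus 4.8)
The plan is to reduce the claimed inequality (\ref{KL}) to a single wedge-product inequality that is exactly the content of Lemma \ref{lemma1}. Writing $\mathbf{v}_i=\overrightarrow{A_iA_{i+1}}$ for the sides of $K$, I would first record, via the same ear argument that produced (\ref{ear_reasoning}), that
\[
\Delta(K')=\Delta(K)-r\,E_K,\qquad E_K:=\sum_{i}\mathbf{v}_{i-1}\wedge\mathbf{v}_i,
\]
where $E_K$ is the sum of the areas of the $n+1$ ears of $K$ and $r=m(1-m)$. Since $K$ is convex, every ear area $\mathbf{v}_{i-1}\wedge\mathbf{v}_i$ is positive, so $E_K>0$; dividing gives $\Delta(K')/\Delta(K)=1-r\,E_K/\Delta(K)$, and likewise $\Delta(L')/\Delta(L)=1-r\,E_L/\Delta(L)$ for any convex descendant $L$. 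Because $r>0$, inequality (\ref{KL}) is then equivalent to $E_K/\Delta(K)\le E_L/\Delta(L)$, i.e. to $E_K\,\Delta(L)\le E_L\,\Delta(K)$.

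Next I would choose the vertex to delete. Applying Lemma \ref{lemma1} to $K$ (legitimate since $K$ has $n+1\ge 7\ge 6$ sides) yields an index $i$ with $\mathbf{v}_{i+1}\wedge\mathbf{v}_{i+2}\le \mathbf{v}_i\wedge\mathbf{v}_{i+2}+\mathbf{v}_{i+1}\wedge\mathbf{v}_{i+3}$. Let $L$ be the $n$-gon obtained by deleting the single vertex $A_{i+2}$, the common endpoint of $\mathbf{v}_{i+1}$ and $\mathbf{v}_{i+2}$; deleting a vertex of a convex polygon always leaves a convex polygon, so $L$ is admissible. The deletion merges the two sides $\mathbf{v}_{i+1}$ and $\mathbf{v}_{i+2}$ into the single side $\mathbf{w}:=\mathbf{v}_{i+1}+\mathbf{v}_{i+2}$ and removes precisely the ear at $A_{i+2}$, so
\[
\Delta(L)=\Delta(K)-\delta,\qquad \delta:=\mathbf{v}_{i+1}\wedge\mathbf{v}_{i+2}>0.
\]

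The heart of the computation is tracking how the ear sum changes. Only the three ears of $K$ at $A_{i+1}$, $A_{i+2}$, $A_{i+3}$ are altered, and they are replaced by the two ears of $L$ at $A_{i+1}$ and $A_{i+3}$, whose areas are $\mathbf{v}_i\wedge\mathbf{w}$ and $\mathbf{w}\wedge\mathbf{v}_{i+3}$. Expanding by linearity and cancelling common terms leaves
\[
E_L=E_K+\eta,\qquad \eta:=\mathbf{v}_i\wedge\mathbf{v}_{i+2}+\mathbf{v}_{i+1}\wedge\mathbf{v}_{i+3}-\mathbf{v}_{i+1}\wedge\mathbf{v}_{i+2},
\]
and $\eta\ge 0$ is exactly the inequality (\ref{1234}) guaranteed by Lemma \ref{lemma1} for this very $i$. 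Substituting $\Delta(L)=\Delta(K)-\delta$ and $E_L=E_K+\eta$ into $E_K\,\Delta(L)\le E_L\,\Delta(K)$ and simplifying collapses everything to $0\le \eta\,\Delta(K)+E_K\,\delta$, which holds since $\eta\ge 0$, $\Delta(K)>0$, $E_K>0$ and $\delta>0$. This proves (\ref{KL}).

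The only genuine obstacle is the third paragraph: getting the ear bookkeeping right so that the surviving defect is exactly $\eta$, and recognizing \emph{in advance} that the correct vertex to remove is the middle one of the four consecutive sides supplied by Lemma \ref{lemma1} — any other choice would leave a defect not controlled by the lemma. Once the deletion is matched to the lemma in this way, the remaining algebra is routine and every term carries a definite sign.
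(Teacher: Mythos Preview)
Your proof is correct and follows essentially the same strategy as the paper: invoke Lemma~\ref{lemma1} to locate the right vertex, delete it, and use the resulting inequality $\eta\ge 0$ to compare the two ratios. The only difference is cosmetic: you route the computation through the ear-sum identity $\Delta(K')=\Delta(K)-rE_K$ and compare $E_K/\Delta(K)$ with $E_L/\Delta(L)$ directly, whereas the paper computes the area of the explicit pentagon $P=B_1B_2B_3B_4C$ cut off between $K'$ and $L'$, obtains $\Delta(P)=r\eta+\delta\ge\delta$, and then compares fractions; unwinding either argument yields the same inequality $0\le \eta\,\Delta(K)+E_K\,\delta$.
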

\begin{proof}
As before, denote $A_iA_{i+1}=\mathbf{v}_i$ and
$\mathbf{v}_i\wedge\mathbf{v}_j=a_{ij}$ for all $1\le i,\,j\le n+1$.
By Lemma \ref{lemma1} we may assume that $a_{23}\le a_{13}+a_{24}$.

Let $L$ be obtained from $K$ after removing vertex $A_3$, that is
$L=A_1A_2A_4\ldots A_nA_{n+1}$. Let point $C$ on $A_2A_4$ such that
$A_2C:CA_4=m:(1-m)$ - see figure \ref{tdk8}. Then, the first
$m$-Kasner descendant of $L$ is $L'=B_1CB_4B_5\ldots B_nB_{n+1}$. It
is easy to see that
$\Delta(L)=\Delta(K)-\Delta(A_2A_3A_4)=\Delta(K)-a_{23}$. On the
other hand, the area of $K'$ exceeds the area of $L'$ by the area of
the non-convex pentagon $P=B_1B_2B_3B_4C$, hence
$\Delta(L')=\Delta(K')-\Delta(P)$.

\medskip

\begin{figurehere}
\begin{center}
\scalebox{.7}{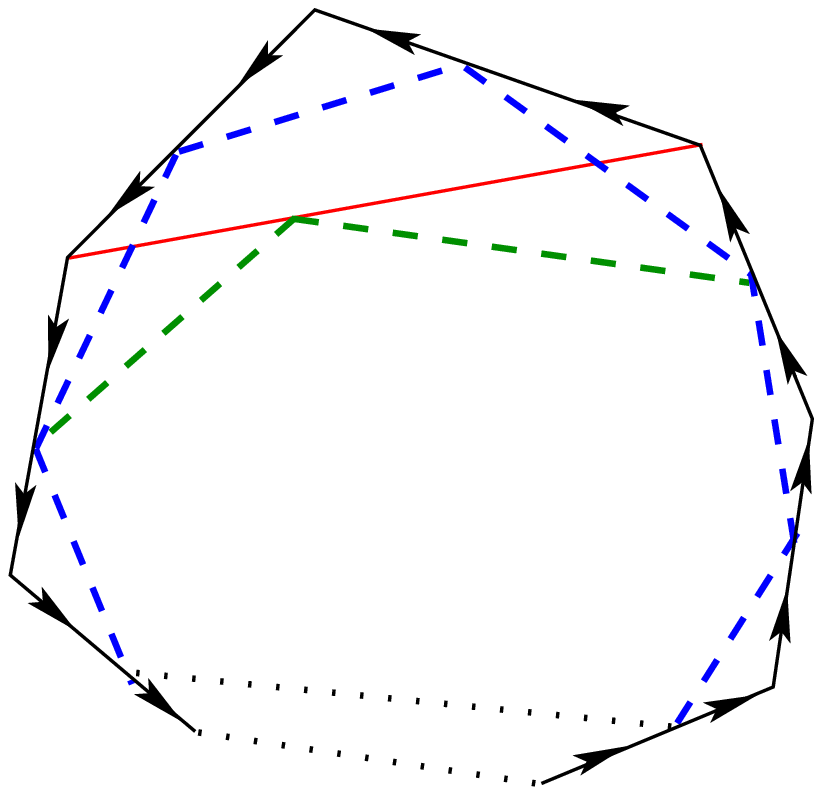} \caption{Figure for lemma 7.3
}\label{tdk8}
\end{center}
\end{figurehere}We first compute $\Delta(P)$. It is easy to see that
\begin{equation}\label{WP}
\Delta(P)=\Delta(B_1B_2C)+\Delta(B_2B_3C)+\Delta(B_3B_4C).
\end{equation}
We have that
\begin{eqnarray*}
\Delta(B_1B_2C)&=&\overrightarrow{B_1B_2}\wedge\overrightarrow{B_2C}=
((1-m)\mathbf{v}_1+m\mathbf{v}_2)\wedge m\mathbf{v}_3=m(1-m)a_{13}+m^2a_{23}.\\
\Delta(B_2B_3C)&=&\overrightarrow{B_3C}\wedge\overrightarrow{B_2C}=
(1-m)\mathbf{v}_2\wedge m\mathbf{v}_3=m(1-m)a_{23}.\\
\Delta(B_3B_4C)&=&\overrightarrow{CB_3}\wedge\overrightarrow{B_3B_4}=
(1-m)\mathbf{v}_2\wedge
((1-m)\mathbf{v}_3+m\mathbf{v}_4)=(1-m)^2a_{23}+m(1-m)a_{24}.
\end{eqnarray*}

Combining the last three equalities into (\ref{WP}) we obtain that
$\Delta(P)=m(1-m)(a_{13}+a_{24}-a_{23})+a_{23}$ and after using our
assumption $a_{23}\le a_{13}+a_{24}$ we have that
\begin{equation}\label{23}
\Delta(P)\ge a_{23}.
\end{equation}
Finally, using (\ref{23}) we obtain that
\begin{equation*}
\frac{\Delta(L')}{\Delta(L)}=\frac{\Delta(K')-\Delta(P)}{\Delta(K)-a_{23}}\le\frac{\Delta(K')}{\Delta(K)}
\end{equation*}
which proves the lemma.
\end{proof}
%%%%%%%%%%%%%%%%%%%%%%%%%%%%%%%%%%%%%%%%%%%%%%%%%%%%%%%%%%%%%%%%%%%%%%%%%%%%%%%%%%%%%%%%%%%%%%%%%%%%%%%%%%%%%%%
%%%%%%%%%%%%%%%%%%%%%%%%%%%%%%%%%%%%%%%%%%%%%%%%%%%%%%%%%%%%%%%%%%%%%%%%%%%%%%%%%%%%%%%%%%%%%%%%%%%%%%%%%%%%%%%

We are now in position to prove the main result of this section.
Below we give a more precise formulation of theorem \ref{thmngon}.
As above, given $m$ in $(0,1)$ and a convex polygon $K$, $K'$
denotes the first $m$-Kasner descendant of $K$.

\begin{thm}\label{precise}
{\bf i.} For any $m$ in $(0,\,1)$ and for any convex $n$-gon $K$
with $n\ge 6$, we have that
\begin{equation}\label{parti}
1-2m(1-m)<\frac{\Delta(K')}{\Delta(K)}<1.
\end{equation}
{\bf ii.} For any $m$ in $(0,\,1)$, for any positive integer $n\ge
6$ and for any $\epsilon>0$ there exists a convex $n$-gon $K$ such
that
\begin{equation}\label{partii}
\frac{\Delta(K')}{\Delta(K)}<1-2m(1-m)+\epsilon.
\end{equation}
{\bf iii.} For any $m$ in $(0,\,1)$, for any positive integer $n\ge
6$ and for any $\epsilon>0$ there exists a convex $n$-gon $K$ such
that
\begin{equation}\label{partiii}
\frac{\Delta(K')}{\Delta(K)}>1-\epsilon.
\end{equation}
\end{thm}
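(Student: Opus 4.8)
The plan is to recast the entire statement through the ear-counting identity already used in the pentagon and hexagon sections. At each vertex $A_i$ the descendant $K'$ deletes the corner triangle $B_{i-1}A_iB_i$, whose area equals $r\,\Delta(A_{i-1}A_iA_{i+1})$ with $r=m(1-m)$ (since $\overrightarrow{A_iB_{i-1}}=-(1-m)\mathbf{v}_{i-1}$ and $\overrightarrow{A_iB_i}=m\mathbf{v}_i$); as these corner triangles have pairwise disjoint interiors for convex $K$, summing them yields
\begin{equation*}
\frac{\Delta(K')}{\Delta(K)}=1-r\,\frac{E(K)}{\Delta(K)},\qquad E(K):=\sum_{i=1}^{n}\Delta(A_{i-1}A_iA_{i+1}).
\end{equation*}
Part \textbf{i} is thus equivalent to $0<E(K)<2\Delta(K)$. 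The inequality $E(K)>0$ (hence ratio $<1$) is immediate from convexity, every ear being a genuine positive-area triangle. For $E(K)<2\Delta(K)$ (hence ratio $>1-2r$) I would induct on $n$: the base case $n=6$ is exactly Theorem \ref{thmhexagon}, and for $n\ge 7$, Lemma \ref{lemma2} produces, after deleting one vertex, a convex $(n-1)$-gon $L$ with $\Delta(K')/\Delta(K)\ge\Delta(L')/\Delta(L)$; since $n-1\ge 6$ the inductive hypothesis gives $\Delta(L')/\Delta(L)>1-2r$, closing the step. Here Lemma \ref{lemma1} does the real work, as it is what guarantees a vertex whose removal, via Lemma \ref{lemma2}, cannot decrease the ratio.

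For the sharpness statements \textbf{ii} and \textbf{iii} I would exhibit a one-parameter family of convex $n$-gons $K_t$ degenerating, as $t\to 0$, to a fixed nondegenerate triangle $T=PQR$, with the $n$ vertices distributed near the three corners: certain corners carry a single vertex converging to the corner, while the others carry a cluster of $\ge 2$ vertices placed on a tiny outward convex arc of radius $O(t)$ about the corner, and no vertex lies in the interior of an edge. The key is to track the limit of each ear. If all three vertices of an ear, or even just two of them, approach a single corner, then two of its vertices lie within $O(t)$ of each other and the ear area tends to $0$; an ear can survive in the limit only when its three vertices approach three distinct corners, which forces its middle vertex to be a \emph{lonely} (singleton) corner, in which case that ear tends to $\Delta(T)$. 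Consequently $E(K_t)\to k\cdot\Delta(T)$, where $k$ is the number of lonely corners, while $\Delta(K_t)\to\Delta(T)$, so $E(K_t)/\Delta(K_t)\to k$.

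Choosing the configuration then pins down the limit. For part \textbf{ii} I would make exactly two corners lonely and cluster the remaining $n-2\ge 2$ vertices at the third corner, so that $E/\Delta\to 2$ and the ratio tends to $1-2r$; for part \textbf{iii} I would cluster $\ge 2$ vertices at each of the three corners (possible precisely because $n\ge 6$), so that $E/\Delta\to 0$ and the ratio tends to $1$. In both cases part \textbf{i} guarantees that for every $t>0$ the ratio lies strictly inside $(1-2r,1)$, so these families realize the two bounds as a greatest lower bound and a least upper bound without attaining them, which is exactly what the $\epsilon$-formulations in \textbf{ii} and \textbf{iii} require. The only routine verification is convexity of $K_t$ for small $t$, which holds because a triangle with some corners replaced by short outward arcs is convex.

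The main obstacle is not the deduction above — once Lemmas \ref{lemma1} and \ref{lemma2} are available the lower bound is a clean induction and the upper bound is trivial — but rather the modeling insight behind part \textbf{ii}: one must recognize that the extremal regime is a near-triangle carrying exactly \emph{two} sharp vertices (equivalently, a nearly degenerate quadrilateral), neither the fully clustered near-triangle (which yields $0$, giving part \textbf{iii}) nor the honest triangle (which yields $3$). Arranging the count of lonely corners to land precisely on $2$, and confirming rigorously that clusters contribute nothing in the limit, is the crux; the remaining estimates are order-of-magnitude bounds on triangle areas and are routine.
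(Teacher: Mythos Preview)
Your argument is correct throughout. For part \textbf{i} and part \textbf{ii} you follow the paper exactly: the lower bound by induction on $n$ via Lemma \ref{lemma2} with base case the hexagon theorem, and the approach to $1-2r$ via a near-triangle with two lonely corners and one clustered corner (the paper phrases this as cutting a tiny corner off a unit triangle and placing $A_3,\ldots,A_n$ on a small arc there, but the geometry is identical to your ``two singletons, one cluster'' picture).

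Part \textbf{iii} is where you diverge. The paper proves it by a second induction: starting from a hexagon with ratio $>1-\epsilon$ (supplied by Theorem \ref{thmhexagon}), it appends a new vertex $A_{n+1}$ on a short segment near $A_n$ chosen so that the hypothesis of Lemma \ref{lemma2} holds at $A_{n+1}$, whence removing $A_{n+1}$ recovers the old polygon and the ratio can only go up. Your construction is instead direct and uniform with part \textbf{ii}: cluster at least two vertices at each corner of a triangle so that \emph{no} corner is lonely and $E(K_t)/\Delta(K_t)\to 0$. Both are valid; your route is shorter and conceptually cleaner (one family handles \textbf{ii} and \textbf{iii} by toggling the number of lonely corners between $2$ and $0$), while the paper's route has the virtue of reusing Lemma \ref{lemma2} and avoiding any limiting-area estimates. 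The only care needed in your version is the convexity of $K_t$: the arcs must bulge toward the interior of the removed corner (outward from the chord but inside the original triangle), which is exactly how the paper places its arc in part \textbf{ii}.
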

\begin{proof}{\bf i.}
The second inequality in (\ref{parti}) is trivial since
$int(K')\subset int(K)$. For the first inequality we are going to do
induction on $n$. We have already shown in theorem \ref {thmhexagon}
that the first inequality is true if $n=6$. Let $K$ be a convex
$(n+1)$-gon, $n\ge 6$. Then according to lemma \ref{lemma2} one can
remove a vertex of $K$ such that the resulting $n$-gon $L$ has the
property stated in (\ref{KL}). Coupling this with the induction
hypothesis we obtain that
\begin{equation*}
\frac{\Delta(K')}{\Delta(K)}\ge\frac{\Delta(L')}{\Delta(L)}>1-2m(1-m).
\end{equation*}
{\bf ii.} Start with a triangle of unit area, $MA_1A_2$. Cut of a
small triangle $MA_3A_n$ of area $\epsilon ^2$  as shown in figure
9. Then replace the segment $A_nA_3$ by a small circular arc along
which place the remaining vertices $A_4$, $A_5$, $\ldots$, $A_{n-1}$
- as in the figure \ref{tdk9} below.
\begin{figurehere}
\begin{center}
\scalebox{.5}{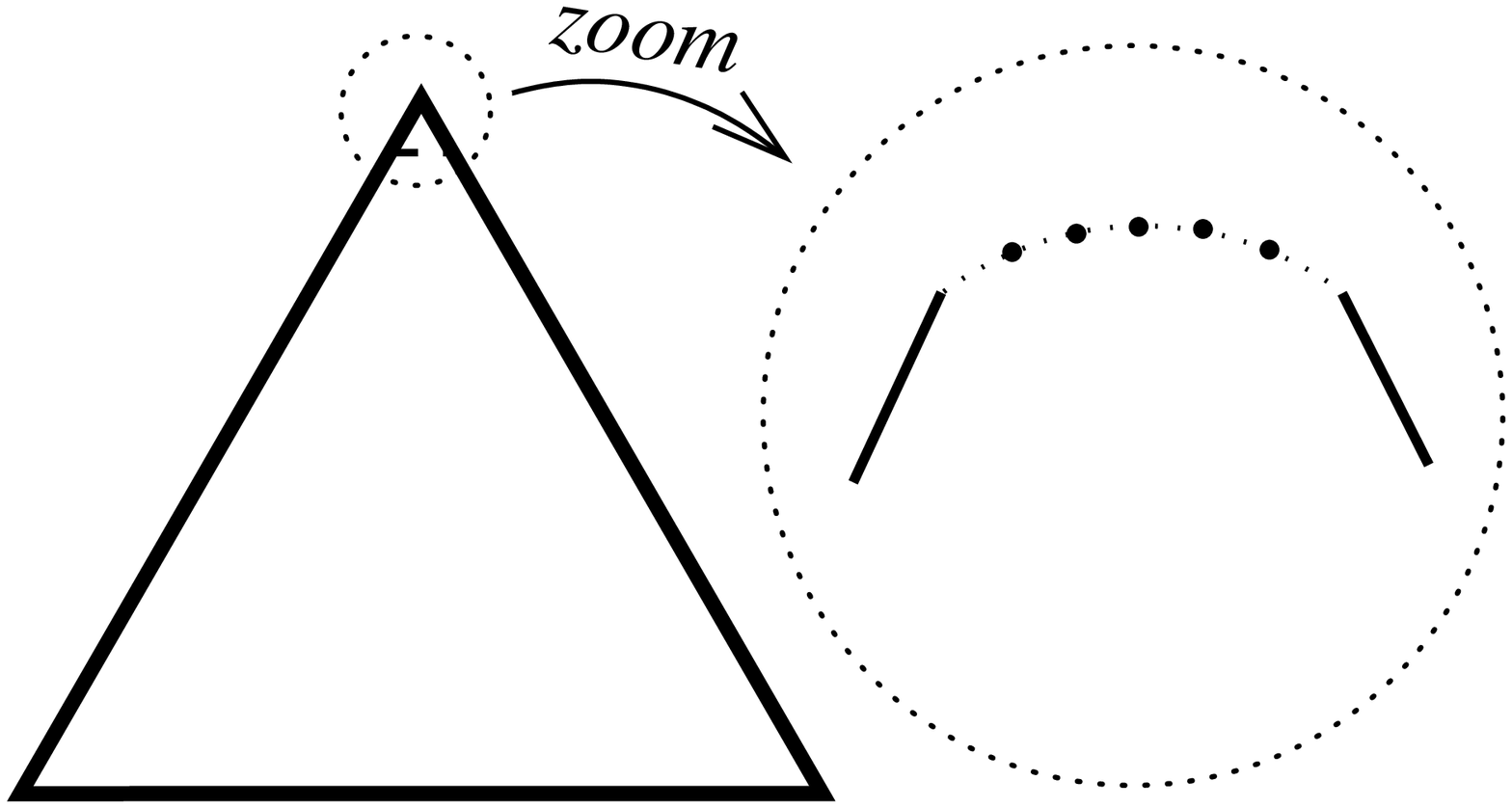} \caption{Main theorem,
part {\bf ii}}\label{tdk9}
\end{center}
\end{figurehere}
We claim that the polygon $K=A_1A_2\ldots A_n$ defined above has the
property (\ref {partii}). Denote $m(1-m)=r$ and let $K'$ be the
first $m$-Kasner descendant of $K$. We have
\begin{equation*}
2(1-\epsilon)=\Delta(A_nA_1A_2)+\Delta(A_1A_2A_3)<
\sum_{i=1}^n\Delta(A_{i-1}A_iA_{i+1})=\frac{\Delta(K)-\Delta(K')}{r}
\end{equation*}
which after we divide by $\Delta(K)<1$ and rearrange the terms
becomes
\begin{equation*}
\frac{\Delta(K')}{\Delta(K)}<1-2r+2r\epsilon\le1-2r+\frac{\epsilon}{2}=1-2m(1-m)+\frac{\epsilon}{2}
\end{equation*}
since $r=m(1-m)\le 1/4$. This proves part {\bf ii.}

{\bf iii.} We will use induction. We already proved that there are
hexagons which satisfy (\ref {partiii}). Let $Q=A_1A_2\ldots A_n$ be
a positively oriented convex $n$-gon, $n\geq6$, for which
$\Delta(Q')/\Delta(Q)>1-\epsilon$. Without loss of generality assume
that $a_{n-1,1}>0$. Construct a point $A_{n+1}$ such that
$\overrightarrow{A_nA_{n+1}}=\lambda(\mathbf{v}_{n-1}+\mathbf{v}_n)$,
where $\lambda<\min\{1/2,a_{n,1}/(a_{n,1}+a_{n-1,1})\}$. Then,
$P=A_1A_2A_3\ldots A_nA_{n+1}$ is a positively oriented convex
$(n+1)$-gon as shown in figure \ref{tdk10} below.
\begin{figurehere}
\begin{center}
\scalebox{.40}{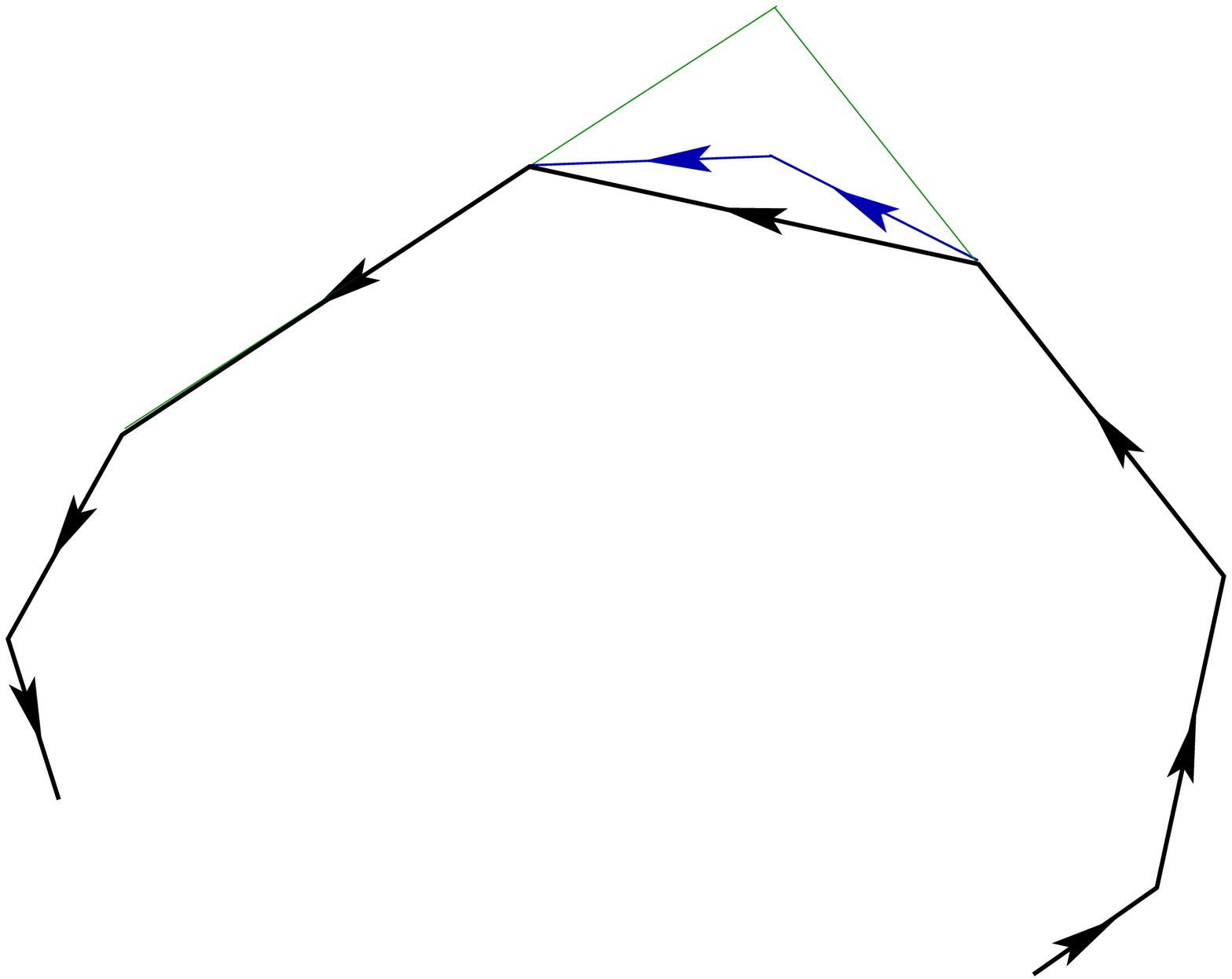} \caption{Main theorem, part
{\bf iii}}\label{tdk10}
\end{center}
\end{figurehere}

We claim that:
$\quad\overrightarrow{A_{n-1}A_n}\wedge\overrightarrow{A_{n+1}A_1}+
\overrightarrow{A_nA_{n+1}}\wedge\overrightarrow{A_1A_2}\geq\overrightarrow{A_nA_{n+1}}\wedge\overrightarrow{A_{n+1}A_1}.$

Indeed, this is equivalent to
\begin{eqnarray*}
&&\mathbf{v}_{n+1}\wedge(\mathbf{v}_n-\lambda(\mathbf{v}_{n-1}+\mathbf{v}_n))+\lambda(\mathbf{v}_{n-1}+\mathbf{v}_n)\wedge\mathbf{v}_1\geq\lambda(\mathbf{v}_{n-1}+
\mathbf{v}_n)\wedge(\mathbf{v}_n-\lambda(\mathbf{v}_{n-1}+\mathbf{v}_n))\Leftrightarrow\\
&&\Leftrightarrow a_{n-1,n}-\lambda a_{n-1,n}+\lambda
a_{n-1,1}+\lambda a_{n,1}\geq\lambda a_{n-1,n}\Leftrightarrow
a_{n-1,n}\geq\lambda(2a_{n-1,n}-a_{n-1,1}-a_{n,1})
\end{eqnarray*}
which is true since we have $a_{n-1,1}>0$ by assumption, $a_{n,1}>0$
by convexity and $\lambda\le 1/2$. It follows that the hypotheses
from lemma \ref {lemma2} are valid for polygon $P$ and vertex
$A_{n+1}$, that is, we have constructed a convex $n+1$-gon $P$ for
which
\begin{equation*}
\frac{\Delta(P')}{\Delta(P)}\ge\frac{\Delta(Q')}{\Delta(Q)}>1-\epsilon.
\end{equation*}
This completes the proof of theorem \ref {precise}.

\end{proof}
\end{section}

{\bf Conclusions and Further Research.}

In the present paper we provide a complete answer regarding the
ratio between the area of a convex polygon and the area of its first
$m$-Kasner descendent. It would be interesting to extend these
results to the ratio between $\Delta(K)$, the area of the original
polygon, and $\Delta(K^{t})$, the area of its $t$-th $m$-Kasner
descendant. Same questions can be asked if instead of areas one
considers perimeters.

\end{document}